\renewcommand{\le}{\leqslant}
\renewcommand{\ge}{\geqslant}
\renewcommand{\geq}{\geqslant}
\newcommand{\eins}{\mathds{1}}
\newcommand{\N}{{\mathbb N}}
\newcommand{\Z}{{\mathbb Z}}
\newcommand{\F}{\mathfrak{F}}
\newcommand{\I}{\mathcal{I}}
\newcommand{\EU}{\mathcal{EU}}
\newcommand{\U}{\mathfrak{U}}
\newcommand{\G}{\mathfrak{G}}
\newcommand{\B}{\mathfrak{B}}
\newcommand{\ifff}{if and only if }
\theoremstyle{plain}
\newtheorem{theorem}{Theorem}[section]
\newtheorem{lemma}[theorem]{Lemma}
\newtheorem{corollary}[theorem]{Corollary}
\theoremstyle{remark}
\newtheorem{remark}[theorem]{Remark}
\theoremstyle{definition}
\newtheorem{definition}[theorem]{Definition}
\newtheorem{problem}[theorem]{Problem}
\numberwithin{equation}{section}
\begin{document}
\title[Conglomerated filters and representations by ultrafilters]{Conglomerated filters, statistical measures, and representations by ultrafilters}

\author{Vladimir Kadets}
\author{Dmytro Seliutin}
\address{School of Mathematics and Informatics V.N. Karazin Kharkiv  National University,  61022 Kharkiv, Ukraine}
\email{v.kateds@karazin.ua}
\email{selyutind1996@gmail.com}
\thanks{ The research of the first author was partially supported  by project PGC2018-093794-B-I00 (MCIU/AEI/FEDER, UE). The second author was supported by a grant from Akhiezer Foundation \url{http://www.ilt.kharkov.ua/bvi/info/akhiezer_fond/akhiezer_fond_main_e.htm}. On its final stage, the research was supported by the National Research Foundation of Ukraine funded by Ukrainian state budget in frames of project 2020.02/0096 ``Operators in infinite-dimensional spaces:  the interplay between geometry, algebra and topology''}
\subjclass[2000]{40A35; 54A20}
\keywords{filter convergence; statistical convergence; statistical measure}

\begin{abstract}
Using a new concept of conglomerated filter we demonstrate in a purely combinatorial way that none of Erd\"{o}s-Ulam filters or summable filters can be generated by a single statistical measure and consequently they cannot be represented as intersections of countable families of ulrafilters. Minimal families of ultrafilters and their intersections are studied and several open questions are discussed.
\end{abstract}

\maketitle


\section{Introduction}

In 1937, Henri Cartan (1904--2008), one of the founders of the Bourbaki group,  introduced the concepts of filter and ultrafilter \cite{Cartan1, Cartan2}. These concepts  were among the cornerstones of  Bourbaki's exposition of General Topology \cite{Bourbaki}. For non-metrizable spaces, filter convergence is a good substitute for ordinary convergence of sequences, in particular a Hausdorff space $X$ is compact \ifff  every filter in $X$ has a cluster point. We refer to \cite[Section 16.1]{Kad2018} for a brief introduction to filters and compactness.  Filters and ultrafiters (or equivalent concepts of ideals and maximal ideals of subsets) are widely used in Topology, Model Theory, and Functional Analysis.

Let us recall some definitions. A \emph{filter} $\F$ on a set $\Omega \neq \emptyset$  is a non-empty collection of subsets of $\Omega$ satisfying the following axioms:
\begin{enumerate}
\item[(a)] $\emptyset \notin \F$;
\item[(b)] if $A,B \in \F$ then $A \cap B \in \F$;
\item[(c)] for every $A \in \F$ if $B \supset A$ then $B \in \F$.
\end{enumerate}
The natural ordering on the set of filters on $\Omega$ is defined as follows: $\F_1 \succ \F_2$ if $\F_1 \supset \F_2$. Maximal elements in this order are called \emph{ultrafilters}. The existence of ultrafilters requires the Axiom of Choice, so in this paper we work in Zermelo-Fraenkel-Axiom of Choice (ZFC) system of set theory axioms. For an ultrafilter $\U$ on $\Omega$ the following is true: for every subset $A \subset \Omega$ that does not belong to $\U$, the complement $\Omega \setminus A$  belongs to $\U$. Actually, this property characterizes those filters which are ultrafilters.

In this paper we are interested in filters on $\N$. Given a filter $\F$ in $\N$, a sequence of $x_n$, $n \in {\mathbb{N}}$ in a topological space $X$ is said to be $\F$-\emph{convergent} to $x$  if for every neighborhood $U$ of $x$ the set $\{n \in {\mathbb{N}}\colon x_n \in U\}$ belongs to $\F$. In particular, if one takes as $\F$ the filter of those sets whose complements are finite (the \emph{Fr\'echet filter} $\F_{Fr}$), then $\F_{Fr}$-convergence coincides with the ordinary one. A filter $\F$ on $\N$ is said to be \emph{free} if it dominates the Fr\'echet filter or, equivalently, if the intersection of all elements of $\F$ is empty. In this case, every ordinary convergent sequence is automatically $\F$-convergent. For a free ultrafilter $\U$ on $\N$, every sequence $(x_n)$ in a compact space $X$ is $\U$-convergent, which makes $\U$-limits a powerful and widely used tool. In the sequel, when we say ``filter'' or ``ultrafilter'' we assume that they are free even if we don't say this explicitly.

We use expressions ``collection'' or ``family'' in the same meaning as ``set''. In particular, if we say $W = \{\U_k\}_{k=1}^{n}$ is a collection of filters, we mean that all $\U_k$ are different.

A non-negative finitely additive measure $\mu$ defined on the collection $ 2^{\mathbb N} $ of all subsets of $\mathbb N$ is said to be a \emph{statistical measure} if $\mu(\mathbb N) = 1$ and $\mu(\{k\}) = 0$ for all  $k \in \mathbb N$. Evidently, a statistical measure cannot be countably additive. Statistical measures were introduced in \cite{ChengLinLan2008, ChengLinShi2009, BaoCheng2013} and extensively studied in \cite{ChengHuaZhou2016}. The \emph{filter generated by a statistical measure} $\mu$ is the collection $\F_\mu$ of those subsets $A \subset \N$ for which $\mu(A) = 1$. Conversely, an example of statistical measure is the characteristic function $\eins_\U$ of a free ultrafilter $\U$ on $\mathbb N$:  $\eins_\U(A) = 1$ if $A \in \U$, and $\eins_\U(A) = 0$ if $A \in 2^{\mathbb N} \setminus \U$. Consequently, every free ultrafilter on $\N$ is generated by a statistical measure. To give more examples, one can use the following straightforward observation that rephrases \cite[Theorem 4.4]{ChengHuaZhou2016}.

\begin{remark} \label{rem1-count-inters}
Let $\mu_n$, $n \in \N$, be a sequence of statistical measures, $a_n$, $n \in \N$, be a sequence of positive reals with $\sum_{n \in \N} a_n = 1$, then $ \sum_{n \in \N} a_n \mu_n$ is a statistical measure. In particular, for a sequence  $\U_n$, $n \in \N$ of free ultrafilters on $\N$, the filter
$$
\bigcap_{n \in \N} \U_n=\{A\subset \N\colon A\in \U_n\ \forall n\in \N\}
$$
is generated by the statistical measure  $\sum_{n \in \N} a_n \eins_{\U_n}$.
\end{remark}

Let us also remark that if a statistical measure $\mu$ satisfies that $\mu(A)\in \{0,1\}$ for every $A\subset \N$ then, clearly, $\mu=\eins_\U$ for the ultrafilter 
$$\U =\{A\subset \N\colon \mu(A)=1\}.$$

Besides of this, not too much is known. There are nontrivial examples of statistical measures coming from the Hahn-Banach Theorem, the most prominent of them are the invariant means on countable commutative semigroups, in particular, the generalized Banach limit $\mathrm{Lim}$, see \cite[Section 5.5]{Kad2018}, especially Exercises 8-12 of Subsection 5.5.2. For some of them the corresponding filter cannot be represented as a countable intersection of ultrafilters. The corresponding examples can be extracted from results by Fremlin and Talagrand, see references and a short description in Section \ref{seq-problems}.

According to \cite[Theorem 5.2]{ChengHuaZhou2016}, the Fr\'echet filter is not generated by a statistical measure. In \cite{Kadets2016} the same is shown for the filter $\F_{st}$ of all subsets $A \subset \N$ of natural density $1$. 

The filter  $\F_{st}$ generates the famous \emph{statistical convergence} for sequences, which, together with its various generalizations, is a very popular area of research. Say, Zentralblatt Math. shows 469 documents published between 1981 and 2020 that have words ``statistical convergence'' in their titles. The name ``statistical measure'' is motivated by statistical convergence. The people exploring statistical convergence mostly come to this kind of problems from mathematical analysis, measure theory and functional analysis. Our background and  motivation are the same.

What the authors of \cite{ChengHuaZhou2016} and \cite{Kadets2016} did not know at the moment of the corresponding publications, was that statistical measures (without using this name) were considered earlier by other people, whose motivation were foundations of mathematics like axiomatic set theory,  model theory and  descriptive set theory. The both mentioned above examples of filters that are not generated by a statistical measure as well as many others can be deduced using descriptive set theory approach which we briefly explain below.

Let us identify, as usual, the collection  $2^{\N}$ of all subsets of $\N$ with the Cartesian power $\{0, 1\}^{\N}$. Considering on $\{0, 1\}$ the discrete topology, one generates the standard product topology on $2^{\N}$. It is hidden in the simplified proof of a Solovay's theorem in \cite[Theorem 15.5]{ToWa2016},
without using the words ``statistical measure'', that the filter $\F$ generated by a statistical measure, considered as a subset of the topological space $\{0, 1\}^{\N}$ cannot have the Baire property so, in particular, is not a Borel subset of $\{0, 1\}^{\N}$. Since every ``explicitly defined'' filter (like $\F_{Fr}$, $\F_{st}$, or Erd\"{o}s-Ulam filters and summable filters considered below) is a Borel subset, none of them is generated by a statistical measure. In order to attract more attention of ``mathematical analysis people'' to such kind of reasoning, we  go into some details and give more references in the last section of the paper.

In our paper we address similar kind of questions using an elementary purely combinatorial approach. In Section~\ref{section:poorandconglomerated} we present a simple sufficient condition (called conglomeration property) for a filter to not being generated by a statistical measure. Erd\"os-Ulam filters and summable filters are conglomerated filters, which gives an elementary proof that they are not generated by a statistical measure. In particular, this simplifies a lot the demonstration of the main result of \cite{Kadets2016}.  Outside of this, in Section~\ref{section:intersection} we present some reasoning about filters that are intersections of finite or countable families of ultrafilters. We demonstrate that, in contrast to finite intersections, a representation as an intersection of countable family of ultrafilters is not unique, which makes the problem of determining the existence of such a representation more difficult. A minimal representation as an intersection of countable family of ultrafilters, if exists, is unique, but it is unclear if it exists. We conclude the paper with a list of open questions and related remarks in section~\ref{seq-problems}.

Before we pass to the main part, let us recall some more common terminology about filters. For a given filter $\F$ on $\N$ the corresponding \emph{ideal} of $\F$, $\I = \I(\F)$, is the collection of the complements of the elements of $\F$, that is,
$$
\I(\F) =\{\N \setminus A \colon A \in \F\}.
$$
From the definition of filter, it follows that $\I(\F)$ satisfies the properties of ideals of subsets: $\N\notin \I(\F)$, $\I(\F)$ is closed by finite unions, and if $B_1\in \I(\F)$ and $B_2\subset B_1$, then $B_2\in \I(\F)$.
The corresponding \emph{grill} $\G = \G(\F)$ of $\F$ is the collection of those sets that do not belong to $\I(\F)$ or, equivalently, the collection of those sets that intersect all the elements of $\F$:
$$
\G(\F)=2^{\N} \setminus \I(\F)=\left\{B\in 2^\N\colon B\cap A\neq\emptyset \ \forall A\in \F\right\}.
$$
It is immediate that $\F\subset \G(\F)$. Nowadays, grills are more often called ``co-ideals'' and denoted either $\I^+$ or $\F^*$.  Using the name ``grill" we pay respect to Gustave Choquet who introduced this concept axiomatically and proved \cite{Choquet} that every axiomatically defined grill corresponds to some filter.

A couple of examples can be of help.
\begin{enumerate}
  \item[(1)] If $\F_{Fr}$ is the Fr\'{e}chet filter, then $\I(\F_{Fr})$ is the collection of all finite subsets of $\N$ and $\G(\F_{Fr})$ is the collection of all infinite subsets of $\N$.
  \item[(2)] If $\F_\mu$ is the filter generated by a statistical measure $\mu$, then $\I(\F_\mu) =\{ A \subset \N \colon \mu(A) = 0\}$, and $\G(\F_\mu) =\{ A \subset \N \colon \mu(A) > 0\}$.
\end{enumerate}

For $A \in \G(\F)$ the \emph{trace} $\F|_A$ of $\F$ on $A$ is the collection of all sets of the form $A \cap B$, $B \in \F$. This collection of sets is a filter on $A$.

A family $W$ of subsets of a set $\Omega$ is said to be \emph{centered} if the intersection of any finite collection of members of $W$ is not empty. A family $W$ is centered if and only if there is a filter $\F$ on $\Omega$ containing $W$. A non-empty family $\mathcal{D}$ of subsets of $\Omega$ is called a \emph{filter basis} if $\emptyset \notin \mathcal{D}$ and for every $A,B\in \mathcal{D}$ there is $C\in \mathcal{D}$ such that $C\subset A\cap B$. Given a filter basis $\mathcal{D}$, the family $\F$ of all sets $A\subset \Omega$ which contain at least one element of $\mathcal{D}$ as a subset is a filter, which is called the \emph{filter generated by the basis} $\mathcal{D}$.

We write $\overline{n, m}$ to denote the set of integers of the form $\{n, n+1, \ldots, m\}$.  For  a set $E$ we denote  $\# E$  the number of elements in $E$.


 \section{Poor filters and conglomerated filters}\label{section:poorandconglomerated}

Two sets $A, B \subset \N$ are said to be \emph{almost disjoint}, if $\# (A\cap B) < \infty$. For a given free filter $\F$ the sets $A, B \subset \N$ are said to be $\F$-\emph{almost disjoint}, if $A\cap B \in \I(\F)$. Remark, that almost disjointness implies $\F$-almost disjointness, as $\F$ contains the Fr\'{e}chet filter.

Here is the first definition for filter we would like to introduce.

\begin{definition}  \label{def-poor}
A free filter $\F$ on $\N$ is called \emph{poor} if every pairwise $\F$-almost disjoint collection $\mathcal A = \{A_\gamma \}_{\gamma  \in \Gamma}  \subset \G(\F)$ of subsets is at most countable.
\end{definition}

The following easy lemma was stated in \cite[Lemma 2.4]{Kadets2016} for almost disjoint sets. The generalization to $\F$-almost disjointness is straightforward, as the proof is copied from \cite[Lemma 2.4]{Kadets2016} almost word-to word.

\begin{lemma} \label{s2-lem-alm-disj}
Let $\mu$ be a statistical measure, then the corresponding filter $\F = \F_\mu$ is poor.
\end{lemma}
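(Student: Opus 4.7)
The plan is to exploit the fact that $\mu$ is a probability measure on all of $2^{\N}$ with total mass $1$, together with a stratification of the family $\mathcal{A}$ by size of measure. Concretely, since $\mathcal{A} \subset \G(\F_\mu) = \{A \colon \mu(A) > 0\}$, I can partition $\Gamma$ as $\Gamma = \bigcup_{n \in \N} \Gamma_n$ where $\Gamma_n = \{\gamma \in \Gamma \colon \mu(A_\gamma) > 1/n\}$. To show $\Gamma$ is countable it is then enough to prove that each $\Gamma_n$ is \emph{finite}. The heart of the matter is therefore a single uniform bound: for any finite subcollection $A_{\gamma_1}, \ldots, A_{\gamma_k}$ drawn from $\mathcal{A}$,
\[
\sum_{i=1}^{k} \mu(A_{\gamma_i}) \leq 1.
\]
Granted this, at most $n-1$ indices can have $\mu(A_\gamma) > 1/n$, so $\#\Gamma_n < n$ and we are done.

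To prove the displayed inequality I would use a disjointification argument adapted to the finitely additive setting. Set $B_1 = A_{\gamma_1}$ and, inductively, $B_i = A_{\gamma_i} \setminus (A_{\gamma_1} \cup \cdots \cup A_{\gamma_{i-1}})$. Then the $B_i$ are pairwise disjoint subsets of $\N$, so by finite additivity
\[
\sum_{i=1}^{k} \mu(B_i) = \mu\bigl(B_1 \cup \cdots \cup B_k\bigr) \leq \mu(\N) = 1.
\]
It remains to argue that $\mu(B_i) = \mu(A_{\gamma_i})$ for every $i$. This is where the $\F$-almost disjointness is used: the difference $A_{\gamma_i} \setminus B_i$ is contained in $\bigcup_{j<i} (A_{\gamma_i} \cap A_{\gamma_j})$, which is a \emph{finite} union of sets from $\I(\F_\mu)$, hence has $\mu$-measure $0$. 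Finite additivity then gives $\mu(A_{\gamma_i}) = \mu(B_i) + 0$, as required.

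I do not expect any genuine obstacle: the only thing to watch is that one must avoid any appeal to countable additivity (which $\mu$ does not possess) and carry out the disjointification and the measure-zero bookkeeping purely with \emph{finite} operations, which the argument above does. Putting the pieces together yields that $\Gamma$ is a countable union of finite sets, which is exactly the conclusion that $\F_\mu$ is poor.
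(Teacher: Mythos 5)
Your proposal is correct and follows essentially the same route as the paper: the same stratification $\Gamma = \bigcup_n \Gamma_n$ with $\Gamma_n = \{\gamma \colon \mu(A_\gamma) > 1/n\}$, and the same key fact that finite additivity survives for pairwise $\F$-almost disjoint sets because their intersections are $\mu$-null. Your disjointification via $B_i = A_{\gamma_i} \setminus \bigcup_{j<i} A_{\gamma_j}$ merely spells out the step the paper states without proof, so there is nothing to add.
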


\begin{proof}
Let $A_\gamma \subset \N$, $\gamma \in \Gamma$ be a collection of pairwise $\F$-almost disjoint subsets such that $A_\gamma  \in \G(\F)$ for all $\gamma \in \Gamma$ (that is $\mu(A_\gamma) > 0$). Remark that since $\mu(A) = 0$ for every $A \in \I(\F)$, the finite-additivity formula $\mu\left(\bigcup_{k=1}^n D_k \right ) = \sum_{k=1}^n \mu(D_k)$ remains true for every finite collection of  pairwise $\F$-almost disjoint subsets. Now, for every $n \in \N$ denote $\Gamma_n = \{\gamma \in \Gamma \colon \mu(A_\gamma) > \frac1n\}$. Then for every finite subset $E \subset \Gamma_n$ we have the following estimation for the number of elements of $E$:
$$
\# E < n  \sum_{\gamma \in E} \mu(A_\gamma) = n \mu\Bigl(\bigcup_{\gamma \in E}A_\gamma \Bigr)  \le n \mu(\N) = n.
$$
Consequently, $\# \Gamma_n < n $. Since $\Gamma = \bigcup_{n \in \N}\Gamma_n $,   $\Gamma$ is at most countable.
\end{proof}

Now we are ready to formulate the promised ``simple sufficient condition'' that enables to demonstrate in an elementary way that several standard filters are not generated by a statistical measure.

\begin{definition}  \label{def-conglomerated}
A free filter $\F$ on $\N$ is said to be \emph{conglomerated} if there is a disjoint sequence of sets $D_n \in \I(\F)$, $n \in \N$, such that $\bigcup_{n \in M} D_n \in \G(\F)$ for every infinite subset $M \subset \N$.
\end{definition}

\begin{theorem} \label{thm-suffic-cond}
If $\F$ is a conglomerated filter, then $\F$ is not poor and so, in particular, it is not generated by a statistical measure.
\end{theorem}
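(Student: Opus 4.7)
The plan is to contrapose Lemma \ref{s2-lem-alm-disj}: if $\F$ is conglomerated we exhibit an uncountable pairwise $\F$-almost disjoint family inside $\G(\F)$, which forces $\F$ not to be poor, and consequently not to come from a statistical measure. The workhorse is the classical fact (provable in ZFC, e.g.\ by identifying $\N$ with the complete binary tree $2^{<\N}$ and assigning to each $x\in 2^\N$ the branch it traces out, or via rational approximations of the reals) that there exists an uncountable family $\{M_\alpha\}_{\alpha\in\Gamma}$ of infinite subsets of $\N$ which is pairwise almost disjoint, meaning $\#(M_\alpha\cap M_\beta)<\infty$ whenever $\alpha\neq\beta$.

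Given such a family and the disjoint sequence $(D_n)_{n\in\N}\subset\I(\F)$ furnished by the definition of conglomerated filter, I would define
\[
A_\alpha := \bigcup_{n\in M_\alpha} D_n, \qquad \alpha\in\Gamma.
\]
Since $M_\alpha$ is infinite, the conglomeration property gives $A_\alpha\in\G(\F)$ for every $\alpha$. For distinct $\alpha,\beta\in\Gamma$, the mutual disjointness of the $D_n$ yields
\[
A_\alpha\cap A_\beta \;=\; \bigcup_{n\in M_\alpha\cap M_\beta} D_n,
\]
which is a \emph{finite} union of members of the ideal $\I(\F)$ and therefore lies in $\I(\F)$. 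Thus the sets $A_\alpha$ are pairwise $\F$-almost disjoint, and $\{A_\alpha\}_{\alpha\in\Gamma}$ is an uncountable subfamily of $\G(\F)$ witnessing that $\F$ is not poor.

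The second assertion then follows immediately by contraposition of Lemma \ref{s2-lem-alm-disj}: if $\F$ were generated by a statistical measure it would be poor, contradicting what we have just shown. There is no real obstacle in the argument; the only nontrivial ingredient is the existence of an uncountable almost disjoint family on $\N$, which is standard. The pleasant feature worth highlighting in the writeup is that the disjointness of the $D_n$ converts ordinary almost disjointness of index sets into $\F$-almost disjointness of the corresponding unions, which is precisely what makes the conglomeration hypothesis the right elementary substitute for more delicate descriptive set-theoretic arguments.
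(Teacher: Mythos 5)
Your proposal is correct and follows essentially the same route as the paper's own proof: take an uncountable almost disjoint family $\{M_\alpha\}$ of infinite subsets of $\N$, set $A_\alpha=\bigcup_{n\in M_\alpha}D_n$, and observe that these sets lie in $\G(\F)$ and are pairwise $\F$-almost disjoint, contradicting poverty and hence Lemma~\ref{s2-lem-alm-disj}. The only difference is that you spell out the computation $A_\alpha\cap A_\beta=\bigcup_{n\in M_\alpha\cap M_\beta}D_n$, which the paper leaves implicit.
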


\begin{proof}
It is well known (see, for example, \cite[Page 77]{Sierp1958}) that $\N$ contains an uncountable family $\Gamma$ of  pairwise almost disjoint infinite subsets (in fact, a family of continuum cardinality). Define for each $\gamma \in \Gamma$
$$
A_\gamma = \bigcup_{n \in \gamma} D_n.
$$
Then the family $\{A_\gamma \}_{\gamma \in \Gamma}$ is uncountable, pairwise $\F$-almost disjoint and $A_\gamma  \in \G(\F)$ for every  $\gamma \in \Gamma$.
\end{proof}

Our next aim is to present some consequences of the previous theorem. The first immediate consequence deals with the Fr\'{e}chet filter.

\begin{corollary} \label{cor1-lem-alm-disj}
The Fr\'echet filter $\F_{Fr}$ is conglomerated so, in particular, it is not generated by a statistical measure.
\end{corollary}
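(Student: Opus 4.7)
The plan is to exhibit an explicit disjoint sequence witnessing conglomeration of $\F_{Fr}$ and then invoke Theorem~\ref{thm-suffic-cond}. The simplest candidate is the sequence of singletons $D_n = \{n\}$: each $D_n$ is finite, hence belongs to $\I(\F_{Fr})$ (the ideal of finite subsets of $\N$), and the $D_n$ are obviously pairwise disjoint. For an arbitrary infinite $M \subset \N$ the union $\bigcup_{n \in M} D_n = M$ is infinite, and infinite subsets of $\N$ are precisely the elements of $\G(\F_{Fr})$ (see example~(1) following the definition of grill). This verifies the requirements of Definition~\ref{def-conglomerated}.

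Having established that $\F_{Fr}$ is conglomerated, Theorem~\ref{thm-suffic-cond} immediately yields that $\F_{Fr}$ is not poor and consequently is not generated by any statistical measure. Since the theorem is already available in the text, no further work is needed — the content of the corollary reduces to the verification of the definition on the given example.

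There is essentially no obstacle here: the argument is a one-line check that the tautological partition of $\N$ into singletons fits Definition~\ref{def-conglomerated} for the Fréchet filter. The only remark worth including is that this recovers in a completely trivial manner the result of \cite[Theorem 5.2]{ChengHuaZhou2016}, whose original proof is considerably more involved; the conceptual gain lies in having isolated the combinatorial property (conglomeration) that makes the argument transparent, not in the corollary itself.
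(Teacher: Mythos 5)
Your proof is correct and is exactly the paper's argument: the paper's entire proof reads ``Just take $D_n = \{n\}$,'' and your verification that the singletons satisfy Definition~\ref{def-conglomerated} before invoking Theorem~\ref{thm-suffic-cond} is the same one-line check spelled out in full.
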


\begin{proof}
Just take $D_n = \{n\}$.
\end{proof}

For a sequence $s=(s_k)$ of non-negative real numbers such that $\sum_{k=1}^{\infty} s_k=\infty$ the \emph{summable ideal} $\I^{s}$ is defined as the collection of those subsets $A\subset{\mathbb{N}}$ that $\sum_{k\in A} s_k < \infty$. The corresponding filter $\F^{s} = \{\N \setminus A \colon A \in \I^{s}\}$ is called \emph{summable filter}. Then $\I(\F^s) = \I^s$, and
$\G(\F^{s}) = \{B \subset \N  \colon \sum_{k\in A} s_k = \infty\}$.

\begin{theorem} \label{thm-summ-filt}
For every sequence $s=(s_k)$ as above, the corresponding summable filter $\F^{s}$ is  conglomerated.
\end{theorem}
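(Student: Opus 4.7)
The plan is to exhibit an explicit disjoint sequence $(D_n)$ of finite subsets of $\N$ witnessing the conglomeration property. The guiding intuition is that since $\sum_k s_k = \infty$, we can chop $\N$ into consecutive blocks each carrying $s$-mass bounded below by a positive constant; any infinite union of such blocks will then carry infinite $s$-mass, hence lie in $\G(\F^s)$, while each individual block, being finite, automatically lies in $\I^s = \I(\F^s)$.

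Concretely, I would set $m_0 = 0$ and inductively choose $m_n > m_{n-1}$ to be the smallest integer satisfying $\sum_{k=m_{n-1}+1}^{m_n} s_k \geq 1$. Such $m_n$ exists because the tail $\sum_{k > m_{n-1}} s_k$ remains infinite. The blocks $D_n := \overline{m_{n-1}+1, m_n}$ are then pairwise disjoint finite subsets of $\N$, so each $D_n$ belongs to $\I(\F^s)$.

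To verify the defining property of conglomeration, fix any infinite $M \subset \N$ and set $A_M = \bigcup_{n \in M} D_n$. Using disjointness of the blocks,
$$
\sum_{k \in A_M} s_k = \sum_{n \in M} \sum_{k \in D_n} s_k \geq \sum_{n \in M} 1 = \infty,
$$
so $A_M \in \G(\F^s)$, as required by Definition~\ref{def-conglomerated}.

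I do not foresee any real obstacle. The whole argument rests solely on the divergence $\sum_k s_k = \infty$, which is precisely the standing hypothesis on the sequence $s$ in the definition of a summable filter. The only microscopic point to check is that each $D_n$ is nonempty and genuinely finite, which is immediate from the minimality of $m_n$ (in the worst case a single term $s_{m_{n-1}+1}$ already exceeds $1$, and $D_n$ reduces to a singleton).
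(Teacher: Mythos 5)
Your proposal is correct and follows essentially the same route as the paper: both partition $\N$ into consecutive finite blocks $D_n$ each carrying $s$-mass at least $1$, observe that finite sets lie in $\I(\F^s)$, and conclude that any infinite union of blocks has divergent $s$-sum and hence lies in $\G(\F^s)$. The only cosmetic difference is your choice of the \emph{smallest} admissible block endpoint, which changes nothing in the argument.
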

\begin{proof}
Denote $d_1 = 0$. We know that $\sum_{k \in \N} s_k = \infty$, so there exists $d_2 \in \N$  such that $ \sum_{k=1}^{d_2} s_k \geq 1$. Obviously $\sum_{k=d_2+1}^{\infty} s_k = \infty$, so there is $d_3 \in \N$, $d_3 > d_2$, with $\sum_{k=d_2+1}^{d_3} s_k \geq 1$. Continuing this procedure, we obtain a sequence of  $d_k$, $d_1 < d_2 < d_3 < \ldots$ such that for all $n \in \N$
$$
 \sum_{k=d_n+1}^{d_{n+1}} s_k \geq 1.
$$
Denote $D_1 =   \overline{d_1 + 1,  d_2}$, $D_2 = \overline{d_2 +  1,  d_3}$, and so on. These $D_n$  form a disjoint sequence of sets. All $D_n$  are finite, so $D_n \in \I(\F^s)$. Finally, for every infinite subset $M \subset \N$ we have
$$
\sum_{k \in \bigcup_{n \in M} D_n} s_k = \sum_{n \in M} \sum_{k=d_n+1}^{d_{n+1}} s_k = \infty,
$$
so $\bigcup_{n \in M} D_n \in \G(F^s)$. This means that  for $\F = \F^{s}$ all the conditions of Definition \ref{def-conglomerated} are fulfilled.
\end{proof}

In the terminology of \cite{Just}, for a sequence $s=(s_k)$ of non-negative real numbers such that $\sum_{k=1}^{\infty} s_k=\infty$, the \emph{Erd\"{o}s-Ulam ideal} $\mathcal{EU}_{s}$ is the ideal of all those $A \subset \N$ that $d_s (A) = 0$  where
\begin{equation*}
d_s(A) =\limsup_{k \to \infty}\frac{\sum_{i\in A\cap \overline{1,k}}s_{i}}{%
\sum_{i=1}^{k}s_{i}}.
\end{equation*}
In order to ensure that $\mathcal{EU}_{s}$ is not the same as the ideal of finite subsets of $\N$ one may, following \cite{Just}, add the condition
\begin{equation*}
\lim_{k \to \infty}\frac{s_{k}}{\sum_{i=1}^{k}s_{i}} = 0,
\end{equation*}
but, for our purposes, this additional restriction is superfluous.
The corresponding filter $\EU^{s} = \{\N \setminus A \colon A \in \mathcal{EU}_{s}\}$ is called the \emph{Erd\"{o}s-Ulam filter}. Then, $\I(\EU^{s}) = \EU_{s}$, and $\G(\EU^{s}) = \{B \subset \N  \colon d_s(B) > 0\}$.

\begin{theorem} \label{thm-eros-ulam}
For every sequence $s=(s_k)$ as above, the corresponding Erd\"{o}s-Ulam filter $\EU^{s}$ is conglomerated.
\end{theorem}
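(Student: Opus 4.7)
The plan is to mimic the construction in the proof of Theorem \ref{thm-summ-filt} for summable filters, but adjust the growth of the cutoff points so that each block captures a fixed positive fraction of the running partial sum of $s$, rather than merely a fixed amount.

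Concretely, let $S_k = \sum_{i=1}^k s_i$. Since $S_k \to \infty$, I can recursively choose an increasing sequence $0 = d_0 < d_1 < d_2 < \cdots$ of natural numbers such that
$$S_{d_{n+1}} \geq 2\, S_{d_n} \qquad \text{for all } n \geq 1,$$
after first picking $d_1$ with $S_{d_1} > 0$. Define the disjoint finite blocks $D_n = \overline{d_n+1, d_{n+1}}$. Each $D_n$ is finite, hence has $d_s$-density zero, so $D_n \in \I(\EU^s)$.

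The key estimate is that for each $n \geq 1$,
$$\sum_{i \in D_n} s_i = S_{d_{n+1}} - S_{d_n} \geq \frac{1}{2} S_{d_{n+1}}.$$
Now let $M \subset \N$ be any infinite set, and put $B = \bigcup_{n \in M} D_n$. For every $n \in M$, evaluating the density ratio at $k = d_{n+1}$ gives
$$\frac{\sum_{i \in B \cap \overline{1,k}} s_i}{S_k} \geq \frac{\sum_{i \in D_n} s_i}{S_{d_{n+1}}} \geq \frac{1}{2}.$$
Since $M$ is infinite, this happens for arbitrarily large $k$, so $d_s(B) \geq \tfrac{1}{2} > 0$, which means $B \in \G(\EU^s)$. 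This verifies all requirements of Definition \ref{def-conglomerated}.

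I do not expect any serious obstacle here; the only subtlety compared with the summable case is noticing that the correct growth condition is multiplicative ($S_{d_{n+1}} \geq 2 S_{d_n}$) rather than additive, because density is a normalized (ratio) quantity. Everything else is just the divergence of $S_k$ and the definition of $d_s$.
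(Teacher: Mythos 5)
Your proof is correct and is essentially the same as the paper's: the condition $S_{d_{n+1}} \ge 2 S_{d_n}$ is just a multiplicative rephrasing of the paper's inequality $\sum_{i\in D_n}s_i \big/ \sum_{i=1}^{d_{n+1}}s_i > \tfrac12$, and the rest of the argument (finite blocks lie in the ideal, evaluate the density ratio at $k=d_{n+1}$ for $n\in M$) matches the paper's proof step for step.
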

\begin{proof}
Denote $d_1 = 0$, $d_2 = 1$ and $D_1 = \{1\}$. Then, evidently
$$
\frac{\sum_{i\in D_1}s_{i}}{\sum_{i=1}^{1}s_{i}} = 1 > \frac{1}{2}.
$$
Let us demonstrate the possibility to construct recurrently a sequence of  $d_n$, $d_1 < d_2 < d_3 < \ldots$ and of corresponding $D_n = \overline{d_n +  1,  d_{n+1}}$ in such a way that for all $n \in \N$
\begin{equation} \label{eq-EU-1}
\frac{\sum_{i\in D_n}s_{i}}{\sum_{i=1}^{d_{n+1}}s_{i}}  > \frac{1}{2}.
\end{equation}
Indeed, let $d_j$  be already constructed for $j = 1, \ldots, n$. Since
$$
\lim_{k \to \infty}\frac{\sum_{i = d_n}^{{k}}s_{i}}{\sum_{i=1}^{k}s_{i}} = 1,
$$
there is a $k > d_n$ such that
$$
\frac{\sum_{i = d_n+1}^{{k}}s_{i}}{\sum_{i=1}^{k}s_{i}} > \frac{1}{2}.
$$
It remains to take this particular $k$ as $d_{n+1}$.

Now, when we have all the $d_n$ and corresponding $D_n$, we see, like in the previous theorem, that $D_n$  form a disjoint sequence of sets and, being finite, they are elements of the ideal $\mathcal{EU}_{s}$. Finally, for every infinite subset $M = \{m_1, m_2, \ldots\} \subset \N$ we have that
\begin{align*}
d_s\left(\bigcup_{m \in M} D_m\right) &=\limsup_{k \to \infty}\frac{\sum\limits_{i\in \bigcup_{m \in M} D_m \cap \overline{1,k}}s_{i}}{\sum_{i=1}^{k}s_{i}} \\
&\ge \limsup_{k \to \infty}\frac{\sum\limits_{i\in \bigcup_{m \in M} D_m \cap \overline{1, d_{m_k+1}}}s_{i}}{\sum_{i=1}^{d_{m_k+1}}s_{i}}  \\
& \ge  \limsup_{k \to \infty}\frac{\sum_{i\in  D_{m_{k}}}s_{i}}{\sum_{i=1}^{d_{m_k + 1} }s_{i}} \, \overset{\eqref{eq-EU-1}}\ge \,  \frac{1}{2} \,  >  \, 0.
\end{align*}
So, $\bigcup_{m \in M} D_m \in \G(\EU^{s})$.
\end{proof}
The filter $\F_{st}$ generating the famous statistical convergence of sequences is just  $\EU^{s}$ for $s = (1,1,1, \ldots)$. So the previous theorem implies (with a simple and clear proof) the main result of \cite{Kadets2016}, which in turn answered a question from \cite{ChengHuaZhou2016}:

\begin{corollary} \label{cor-stat-filt}
The filter $\F_{st}$ is not generated by a statistical measure.
\end{corollary}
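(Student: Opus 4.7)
The plan is to observe that this corollary is an immediate combination of Theorems~\ref{thm-eros-ulam} and~\ref{thm-suffic-cond}. The first step is to identify $\F_{st}$ with the Erd\"os-Ulam filter $\EU^s$ for the constant weight sequence $s = (1, 1, 1, \ldots)$. For these weights one has
$$
d_s(A) = \limsup_{k \to \infty} \frac{\#(A \cap \overline{1,k})}{k},
$$
which is the ordinary upper natural density. Since upper density $0$ automatically forces lower density $0$, one has $A \in \EU_s$ if and only if $A$ has natural density zero, i.e.\ $\N \setminus A \in \F_{st}$, and therefore $\F_{st} = \EU^s$ for this $s$.

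Once this identification is in place, Theorem~\ref{thm-eros-ulam} applied to the constant sequence $s$ yields that $\F_{st}$ is conglomerated, and then Theorem~\ref{thm-suffic-cond} (which itself rests on Lemma~\ref{s2-lem-alm-disj}) gives that $\F_{st}$ is not poor and in particular is not generated by any statistical measure.

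There is no real obstacle to overcome: all the combinatorial work lives in the proof of Theorem~\ref{thm-eros-ulam}, and the only thing to check by hand is the trivial matching of the weighted density to the unweighted one. If one wished to record a self-contained witness specific to this case, one could take the blocks $D_n = \overline{2^{n-1}, 2^n - 1}$, which are finite (so lie in $\I(\F_{st})$), while for any infinite $M = \{m_1 < m_2 < \cdots\} \subset \N$ the set $\bigcup_{m \in M} D_m$ contains $D_{m_k} \subset \overline{1, 2^{m_k}-1}$ for each $k$, and hence has upper density at least
$$
\limsup_{k \to \infty} \frac{2^{m_k - 1}}{2^{m_k}-1} \;=\; \frac{1}{2} \;>\; 0,
$$
which directly verifies Definition~\ref{def-conglomerated} for $\F_{st}$ and completes the proof via Theorem~\ref{thm-suffic-cond}.
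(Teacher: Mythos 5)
Your proof is correct and follows exactly the paper's route: identify $\F_{st}$ with $\EU^{s}$ for $s=(1,1,1,\ldots)$ and combine Theorem~\ref{thm-eros-ulam} with Theorem~\ref{thm-suffic-cond}. The optional explicit witness $D_n = \overline{2^{n-1}, 2^n-1}$ is a correct (and pleasant) concrete instance of the blocks that the proof of Theorem~\ref{thm-eros-ulam} constructs in general, but it is not needed beyond what the paper already provides.
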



 \section{Intersections of families of ultrafilters} \label{section:intersection}
For a collection $W$ of subsets of $2^\N$ we denote by $\cap W$ the intersection of all members of that collection. That is, 
$$
\cap W=\left\{B\subset \N \colon B\in \mathcal{U}\ \forall \mathcal{U}\in W\right\}
$$

\begin{definition}  \label{repesentation}
A collection $W$ of free ultrafilters is said to be a \emph{representation} of the filter $\F$, if $\cap W = \F$
\end{definition}

Let us start with two easy remarks.

\begin {lemma}  \label{lem-filters-incl}
Let $\F_1$, $\F_2$ be free filters on $\N$ with $\F_1 \subset \F_2$. Then $\G(\F_2) \subset \G(\F_1)$ so, in particular, $\F_2\subset \G(\F_1)$.
\end{lemma}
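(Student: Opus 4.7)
The plan is to unfold the definition of the grill and verify the two assertions essentially by inspection; no obstacle is expected, as everything reduces to set-theoretic book-keeping.

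First I would use the characterization given just above the statement, namely
\[
\G(\F)=\{B\subset\N\colon B\cap A\neq\emptyset\text{ for every }A\in\F\},
\]
equivalently $\G(\F)=2^\N\setminus\I(\F)$ with $\I(\F)=\{\N\setminus A\colon A\in\F\}$. The key monotonicity to extract is that $\F_1\subset\F_2$ implies $\I(\F_1)\subset\I(\F_2)$: indeed, any $C\in\I(\F_1)$ is of the form $\N\setminus A$ with $A\in\F_1\subset\F_2$, so $C\in\I(\F_2)$. Taking complements in $2^\N$ reverses the inclusion and yields $\G(\F_2)\subset\G(\F_1)$.

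An equally short direct argument, which I might prefer to present for readability, is the following: pick any $B\in\G(\F_2)$; then $B\cap A\neq\emptyset$ for every $A\in\F_2$, and since $\F_1\subset\F_2$ this remains true for every $A\in\F_1$, whence $B\in\G(\F_1)$.

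For the ``in particular'' clause I would just observe that any free filter $\F$ satisfies $\F\subset\G(\F)$: if $A\in\F$, then for every $B\in\F$ we have $A\cap B\in\F$, hence $A\cap B\neq\emptyset$ since $\emptyset\notin\F$. Applying this to $\F_2$ and combining with the already established $\G(\F_2)\subset\G(\F_1)$ gives $\F_2\subset\G(\F_2)\subset\G(\F_1)$, which completes the proof. The whole argument is purely definitional, and the only ``care'' needed is to remember that passing from filters to ideals preserves inclusions while passing from ideals to grills reverses them.
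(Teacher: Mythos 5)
Your proof is correct and follows essentially the same route as the paper: the inclusion $\F_1\subset\F_2$ gives $\I(\F_1)\subset\I(\F_2)$, complementation reverses this to $\G(\F_2)\subset\G(\F_1)$, and the ``in particular'' clause comes from the general fact $\F\subset\G(\F)$ (which the paper records as immediate in the introduction and which you verify explicitly). Your alternative direct argument via the intersection characterization is equally valid but is only a cosmetic variant, not a genuinely different approach.
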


\begin{proof}
As $\F_1 \subset \F_2$, $\I(\F_1)\subset \I(\F_2)$, so $\G(\F_2)=2^\N\setminus \I(\F_2)$ is contained in $2^\N\setminus\I(\F_1)=\G(\F_1)$. Finally, that $\F_2\subset \G(\F_2)$.
\end{proof}

\begin {lemma}  \label{lem-filters-excl}
Let $\F$  be a free filter on $\N$. Then, $A \in 2^\N \setminus \F$ if and only if $(\N \setminus A) \in \G(\F)$.
\end{lemma}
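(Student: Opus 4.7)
The plan is to observe that this lemma is a direct unfolding of the definitions of $\I(\F)$ and $\G(\F)$, and no structural property of $\F$ beyond being a filter is actually used.

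First, I would record the following reformulation of membership in $\I(\F)$: by definition $\I(\F) = \{\N \setminus A \colon A \in \F\}$, so a set $B \subset \N$ lies in $\I(\F)$ if and only if $B = \N \setminus A$ for some $A \in \F$, which (taking $A = \N \setminus B$) is equivalent to $\N \setminus B \in \F$. Applying this criterion to $B := \N \setminus A$ and using $\N \setminus (\N \setminus A) = A$, I get the equivalence
\[
\N \setminus A \in \I(\F) \iff A \in \F.
\]

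Next, I would use the definition $\G(\F) = 2^\N \setminus \I(\F)$ and negate both sides of the above equivalence:
\[
\N \setminus A \in \G(\F) \iff \N \setminus A \notin \I(\F) \iff A \notin \F \iff A \in 2^\N \setminus \F,
\]
which is exactly the claim.

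The step I would regard as the main (and only) point requiring attention is simply bookkeeping of the double complement, to avoid a sign-style confusion between ``$B$ belongs to $\I(\F)$'' and ``$\N \setminus B$ belongs to $\F$''. Beyond this there is no genuine obstacle: freeness of $\F$ is never invoked, and the characterization of $\G(\F)$ as the family of sets meeting every element of $\F$ is not needed either — the result is a one-line consequence of the set-theoretic definition of $\I(\F)$ and $\G(\F)$.
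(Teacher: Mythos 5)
Your proof is correct and follows essentially the same route as the paper's: both reduce the claim to the equivalence $\N \setminus A \in \I(\F) \iff A \in \F$ obtained directly from the definition of $\I(\F)$, and then pass to $\G(\F) = 2^\N \setminus \I(\F)$ by negation. Your version just spells out the double-complement bookkeeping slightly more explicitly.
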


\begin{proof}
If $A\notin \F$, then $\N\setminus A\notin \I(\F)$, so $(\N\setminus A)\in \G(\F)$. Conversely, if $(\N \setminus A) \in \G(\F)$, then $\N\setminus A\notin \I(\F)$, so $A\notin \F$.
\end{proof}

The following easy remark complements the well-known fact that every filter $\F$ is equal to the intersection of all ultrafilters that contain $\F$.

\begin{theorem} \label{thm-continuum}
Let $\F$  be a free filter on $\N$. Then, there exists a family $W$ of ultrafilters on $\N$ such that $\F = \cap W$ and $W$ is of at most continuum cardinality.
\end{theorem}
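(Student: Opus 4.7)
The plan is to build $W$ by producing, for each subset $A\subset \N$ that is \emph{not} in $\F$, a single witnessing ultrafilter $\U_A$ that contains $\F$ but excludes $A$. Since the indexing runs over subsets of $\N$, this automatically gives $|W|\le \# 2^{\N}=\mathfrak{c}$.

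First I would fix $A\in 2^{\N}\setminus \F$. By Lemma \ref{lem-filters-excl}, $\N\setminus A\in \G(\F)$, which means $(\N\setminus A)\cap B\neq\emptyset$ for every $B\in \F$. Consequently the family $\F\cup\{\N\setminus A\}$ is centered (any finite subcollection has nonempty intersection because finite intersections from $\F$ stay in $\F$, and then meet $\N\setminus A$ in a nonempty set). Hence this family is contained in some free ultrafilter $\U_A$; equivalently, the trace $\F|_{\N\setminus A}$ extends to an ultrafilter on $\N\setminus A$ and then to a free ultrafilter on $\N$ containing $\N\setminus A$. By construction $\F\subset \U_A$ and $A\notin \U_A$.

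Now I would set
\[
W=\bigl\{\U_A \colon A\in 2^{\N}\setminus \F\bigr\}.
\]
The inclusion $\F\subset \cap W$ is immediate because every $\U_A$ contains $\F$. For the reverse inclusion, take any $B\notin \F$; then the ultrafilter $\U_B\in W$ does not contain $B$, so $B\notin \cap W$. Therefore $\cap W=\F$. Finally, $\# W\le \#(2^{\N}\setminus \F)\le \#2^{\N}=\mathfrak{c}$, which is the required bound.

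There is essentially no obstacle here: the only nontrivial ingredient is the standard ultrafilter extension argument (requiring the Axiom of Choice, which the paper already assumes), applied to the centered family $\F\cup\{\N\setminus A\}$. The cardinality bound $\mathfrak{c}$ arises for free from the size of the index set $2^{\N}\setminus \F$.
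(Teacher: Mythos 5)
Your proposal is correct and follows essentially the same route as the paper: apply Lemma \ref{lem-filters-excl} to each $A\in 2^{\N}\setminus\F$ to see that $\F\cup\{\N\setminus A\}$ is centered, extend to an ultrafilter $\U_A$ excluding $A$, and take $W=\{\U_A\colon A\in 2^{\N}\setminus\F\}$, with the continuum bound coming from the size of the index set. No discrepancies to report.
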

\begin{proof}
By Lemma \ref{lem-filters-excl}, for every $A \in 2^\N \setminus \F$ the family of sets $\{\N \setminus A\}\cup \F$ is centered, so there is a filter that contains $\{\N \setminus A\}\cup \F$ and, consequently, we may select an ultrafilter $\U_A$ such that $(\{\N \setminus A\}\cup \F) \subset \U_A$. In other words, $\F \subset \U_A$, but $A \notin \U_A$. Then
$\F = \bigcap\limits_{A \in (2^\N \setminus \F)} \U_A$, so $W = \{\U_A \colon A \in (2^\N \setminus \F)\}$ provides the required representation of $\F$.
\end{proof}

The next lemma explains better the structure of the intersection of a family of filters.

\begin {lemma}  \label{lem-incl-union}
Let $W = \{\F_\gamma\}_{\gamma \in \Gamma}$  be a collection of free filters on $\N$. Then, given $B_\gamma \in \F_\gamma$ for every $\gamma \in \Gamma$, the set $\bigcup_{\gamma \in \Gamma} B_\gamma$ is an element of $\cap W$.
\end{lemma}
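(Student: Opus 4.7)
The statement is essentially a one-line consequence of the superset-closure axiom (c) in the definition of a filter, so the plan is just to spell that out carefully.

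First I would fix an arbitrary index $\delta \in \Gamma$, and aim to show that $\bigcup_{\gamma \in \Gamma} B_\gamma \in \F_\delta$; since $\delta$ is arbitrary, this will immediately give membership in $\cap W = \bigcap_{\gamma \in \Gamma} \F_\gamma$ by the very definition of $\cap W$ stated right before the lemma.

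The key observation is the trivial set-theoretic inclusion
\[
B_\delta \,\subset\, \bigcup_{\gamma \in \Gamma} B_\gamma,
\]
together with the hypothesis $B_\delta \in \F_\delta$. Applying axiom (c) of the definition of a filter to $\F_\delta$ with $A := B_\delta$ and $B := \bigcup_{\gamma \in \Gamma} B_\gamma$ yields $\bigcup_{\gamma \in \Gamma} B_\gamma \in \F_\delta$. Since $\delta$ was arbitrary, the conclusion follows.

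There is no genuine obstacle here: the statement does not require $\Gamma$ to be countable, nor does it use anything about the $\F_\gamma$ being free or being ultrafilters. The only subtle point worth flagging in the write-up is that the indexing set $\Gamma$ is allowed to be arbitrary, so one should be explicit that we are not performing a finite intersection operation but merely invoking closure under supersets (which works for unions of any cardinality). No use of the Axiom of Choice is needed beyond what is already implicit in having picked a family $(B_\gamma)_{\gamma \in \Gamma}$.
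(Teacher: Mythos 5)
Your proof is correct and is essentially identical to the paper's own argument: both fix an arbitrary index, observe that $B_\delta \subset \bigcup_{\gamma \in \Gamma} B_\gamma$, and invoke the superset-closure axiom of filters. Nothing further is needed.
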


\begin{proof}
For every $j \in \Gamma$ we have that  $\bigcup_{\gamma \in \Gamma} B_\gamma \supset B_j$ and  $B_j \in \F_j$, so by axioms of filter $\bigcup_{\gamma \in \Gamma} B_\gamma \in \F_j$.
\end{proof}

Our next goal is to show that a representation of a given filter as a finite intersection of ultrafilters, if exists, is unique.

\begin {lemma}  \label{lem-incl}
Let $W = \{\U_1, \U_2,\ldots,\U_n\}$ be a finite set of free ultrafilters on $\N$, and $\U$ be a free ultrafilter such that $\cap W \subset \U$. Then $\U \in W$.
\end{lemma}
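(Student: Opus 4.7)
The plan is to argue by contradiction. Suppose $\U \notin W$, so $\U \neq \U_i$ for every $i \in \{1,\ldots,n\}$. Since $\U$ and $\U_i$ are both ultrafilters, and ultrafilters are maximal, neither can be a proper subset of the other; in particular $\U_i \not\subset \U$. So for each $i$ I can fix a witness $C_i \in \U_i \setminus \U$.

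The next step uses Lemma \ref{lem-incl-union}: applying it to the finite family $\{\U_1,\ldots,\U_n\}$ with the choice $B_i = C_i \in \U_i$, the set $C := \bigcup_{i=1}^n C_i$ belongs to $\cap W$. By the hypothesis $\cap W \subset \U$, this gives $C \in \U$.

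The final step exploits that $\U$ is an ultrafilter in two ways. Since each $C_i \notin \U$, its complement $\N\setminus C_i$ lies in $\U$, and $\U$ is closed under finite intersections, so
\[
\N\setminus C \;=\; \bigcap_{i=1}^n (\N\setminus C_i) \;\in\; \U.
\]
Thus both $C$ and its complement are in $\U$, forcing $\emptyset \in \U$, the desired contradiction. Hence $\U \in W$.

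There is no real obstacle here; the only point to handle with care is the selection of the sets $C_i$, which rests on the maximality of ultrafilters (so that $\U_i \neq \U$ actually implies $\U_i \setminus \U \neq \emptyset$, rather than merely $\U_i \not= \U$ at the level of sets). Once the $C_i$ are in hand, Lemma \ref{lem-incl-union} and the complementation property of ultrafilters close the argument in two lines.
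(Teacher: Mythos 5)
Your proof is correct and follows essentially the same route as the paper's: pick witnesses $B_k \in \U_k \setminus \U$, use Lemma \ref{lem-incl-union} to put their union in $\cap W \subset \U$, and derive a contradiction from the complement $\bigcap_{k}(\N\setminus B_k)$ also lying in $\U$. Your explicit remark that the witnesses exist because ultrafilter maximality rules out $\U_k \subsetneq \U$ is a point the paper leaves implicit, but the argument is the same.
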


\begin{proof}
We have to show the existence of such $k \in  \overline{1, n}$ that $\U = \U_k$. Let us assume contrary that  $\U \neq \U_k$ for every $k \in  \overline{1, n}$. This means that for every $k \in  \overline{1, n}$ there exists $B_k \in  \U_k$ such that $B_k \notin \U$. As $\U$ is an ultrafilter, so $(\N \setminus {B_k}) \in \U$ for all $k \in  \overline{1, n}$ and, consequently, $$\N \setminus{\bigcup_{k=1}^{n} B_k} = \bigcap_{k=1}^{n} (\N \setminus {B_k}) \in \U.$$ This means that $\bigcup_{k=1}^{n} B_k \notin \U$. But according to Lemma \ref{lem-incl-union},  $\bigcup_{k=1}^{n} B_k \in \cap W \subset \U$, which leads to contradiction.
\end{proof}
\begin{theorem} \label{thm-incl}
Let $W_1$ and $W_2$ be finite collections of ultrafilrers on $\N$,  such that $\cap W_1 = \cap W_2$. Then $W_1 = W_2$.
\end{theorem}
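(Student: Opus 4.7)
The plan is to deduce this directly from Lemma \ref{lem-incl} by a symmetric containment argument. The main observation is that for any finite collection $W$ of ultrafilters and any $\U \in W$, we trivially have $\cap W \subset \U$, since the intersection of a collection of sets is contained in each of its members.

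First I would fix an arbitrary $\U \in W_1$. From the observation above, $\cap W_1 \subset \U$. Using the hypothesis $\cap W_1 = \cap W_2$, this gives $\cap W_2 \subset \U$. Now Lemma \ref{lem-incl} applied to the finite collection $W_2$ and the ultrafilter $\U$ yields $\U \in W_2$. Since $\U \in W_1$ was arbitrary, this shows $W_1 \subset W_2$. Exchanging the roles of $W_1$ and $W_2$ (the hypothesis is symmetric) gives the reverse inclusion $W_2 \subset W_1$, hence $W_1 = W_2$.

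There is essentially no obstacle here: all the work has already been done in Lemma \ref{lem-incl}, whose proof exploits finiteness through the fact that a finite union of complements (one per ultrafilter) is a complement of a finite intersection, and hence behaves well with respect to ultrafilter membership. Once Lemma \ref{lem-incl} is in hand, Theorem \ref{thm-incl} is a formal consequence and requires no further combinatorial input. I would simply record the two-line argument above as the proof.
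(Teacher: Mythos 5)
Your argument is correct and coincides with the paper's own proof: both fix $\U \in W_1$, note $\cap W_2 = \cap W_1 \subset \U$, invoke Lemma \ref{lem-incl} to get $\U \in W_2$, and conclude by symmetry. Nothing to add.
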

\begin{proof}
For every $\U \in  W_1$  we have that $\U \supset \cap W_1 = \cap W_2$. By Lemma \ref{lem-incl} this gives that $\U \in W_2$. So  $W_1 \subset W_2$. By the same argument $W_2 \subset W_1$.
\end{proof}

\begin{definition}  \label{def-min coll}
A collection $W$ of free ultrafilters consisting of at least two elements is said to be \emph{minimal}, if for every $\U \in W$
$$
\cap W \neq \cap (W \setminus \{\U\}).
$$
A free filter $\F$ on $\N$ is said to be \emph{min-representable} if either it is an ultrafilter or it possess a minimal representation $W$.
\end{definition}

Theorem \ref{thm-incl} implies that every finite set of ultrafilters is minimal, so the intersection of a finite set of ultrafilters is min-representable.
In the sequel, we are going to study what can be said more about minimal representations. First of all, we show that not every filter is min-representable.

\begin {lemma}  \label{lem-filt-notin-ultfilt}
Let $\F_0$  be a free filter and $\U$ be a free ultrafilter on $\N$ such that $\F_0 \not\subset \U$. Denote $\F = \F_0 \cap \U$. Then, for every $D \in \F$ there are $A \in \U$ and $B \in \F_0$ such that $D = A \sqcup B$. Moreover, the trace $\F|_A$ of $\F$ on $A$ is the same as $\U|_A$.
\end{lemma}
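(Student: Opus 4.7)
The plan is to exploit the hypothesis $\F_0 \not\subset \U$ to extract a witness set $C \in \F_0 \setminus \U$, and use it to perform the desired splitting. By the ultrafilter property, $\N \setminus C \in \U$, and $C$ will act as the ``wall'' separating the $\U$-part from the $\F_0$-part of any $D \in \F$.

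For the first claim, given $D \in \F = \F_0 \cap \U$, I would set
\[
A := D \cap (\N \setminus C), \qquad B := D \cap C.
\]
These are clearly disjoint and their union is $D$. Since $D, \N \setminus C \in \U$ and both $D, C \in \F_0$, the axioms of filter give $A \in \U$ and $B \in \F_0$ immediately. This is the routine half.

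The trace equality $\F|_A = \U|_A$ is the slightly more delicate half, and here I would argue both inclusions. The inclusion $\F|_A \subset \U|_A$ is automatic from $\F \subset \U$. For the reverse, given any $E \in \U$, the key trick is to consider
\[
E' := E \cup C.
\]
Then $E' \in \U$ (it contains $E \in \U$) and $E' \in \F_0$ (it contains $C \in \F_0$), so $E' \in \F$. Since $A \subset \N \setminus C$, we have $A \cap C = \emptyset$, and hence $A \cap E' = A \cap E$. This exhibits $A \cap E$ as an element of $\F|_A$, completing the reverse inclusion.

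The only non-obvious step is the trick of enlarging $E$ to $E \cup C$ to land inside both $\U$ and $\F_0$ simultaneously; once that is spotted, the argument reduces to bookkeeping with the filter axioms. No serious obstacle is anticipated.
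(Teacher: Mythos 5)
Your proposal is correct and follows essentially the same route as the paper: both pick a witness set in $\F_0 \setminus \U$, split $D$ along it to get $A \in \U$ and $B \in \F_0$, and prove the trace equality by enlarging a set of $\U$ with a set of $\F_0$ disjoint from $A$ (you use the witness $C$ itself, the paper uses $B$) to land in $\F$ without changing the intersection with $A$. No gaps.
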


\begin{proof}
Since $\F_0 \not\subset \U$, there is a $K \in (\F_0 \setminus \U)$. Denote $B = K \cap D$. We know that both $K$ and $D$ are elements of $\F_0$, so  $B \in \F_0$ as we need. Now, $K \notin \U$, so by the ultrafilter criterion $(\N \setminus K) \in \U$. Consequently, $D \setminus B = D \setminus K = D \cap (\N \setminus K) \in \U$, which means that $A := D \setminus B$ is what we need.

For every $C \in \U|_A$ we have that $C \sqcup B \in \F_0 \cap \U = \F$, so $C = A \cap (C \sqcup B) \in \F|_A$. This demonstrates that the filter $\F|_A$ on $A$ majorates the ultrafilter $\U|_A$ on $A$, so $\F|_A = \U|_A$.
\end{proof}

\begin{theorem} \label{thm-restriction-ultraf}  If a free filter $\F$ possesses a minimal representation $W$, then, for every $\U \in W$, there is  $A \in \U$ such that the trace $\F|_A$ of $\F$ on $A$ is the same as $\U|_A$.
\end{theorem}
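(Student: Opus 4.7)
The plan is to reduce the statement to a direct application of Lemma \ref{lem-filt-notin-ultfilt}. Fix $\U \in W$. Since $W$ is minimal it has at least two elements, so $W\setminus\{\U\}$ is non-empty and $\F_0 := \cap(W\setminus\{\U\})$ is a well-defined free filter on $\N$. Observe that $\F = \F_0 \cap \U$, because intersecting all members of $W$ amounts to intersecting the members of $W\setminus\{\U\}$ and then intersecting with $\U$.

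Next I would use minimality to force $\F_0 \not\subset \U$. By Definition \ref{def-min coll}, $\cap W \neq \cap(W\setminus\{\U\})$, i.e. $\F \neq \F_0$. Together with the obvious inclusion $\F \subset \F_0$ this gives $\F \subsetneq \F_0$. If $\F_0 \subset \U$ held, then $\F_0\cap\U = \F_0$, hence $\F = \F_0$, contradicting the strict inclusion. Therefore $\F_0 \not\subset \U$.

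At this point, the pair $(\F_0,\U)$ satisfies exactly the hypotheses of Lemma \ref{lem-filt-notin-ultfilt}, with the associated filter $\F_0\cap\U$ equal to our $\F$. Picking any $D\in\F$ (for definiteness $D=\N$), the lemma supplies $A\in\U$ and $B\in\F_0$ with $D = A\sqcup B$ and, crucially, $\F|_A = \U|_A$. This $A$ is the set demanded by the theorem, and the proof is complete.

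I do not foresee a substantive obstacle: all the real content sits in Lemma \ref{lem-filt-notin-ultfilt}. The one step that must be stated carefully is the translation of the minimality of $W$ into the non-inclusion $\F_0 \not\subset \U$, which the short argument above accomplishes.
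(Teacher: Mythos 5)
Your proof is correct and follows essentially the same route as the paper's: define $\F_0 = \cap(W\setminus\{\U\})$, note $\F = \F_0\cap\U$, deduce $\F_0\not\subset\U$ from minimality, and apply Lemma \ref{lem-filt-notin-ultfilt} with $D=\N$. The only difference is that you spell out the short argument for $\F_0\not\subset\U$, which the paper leaves as ``by minimality.''
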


\begin{proof}
Denote $\F_0 = \cap (W \setminus \{\U\})$. By minimality,  $\F_0 \not\subset \U$. Also,  $\F = \F_0 \cap \U$. Then, Lemma \ref{lem-filt-notin-ultfilt} applied for $D = \N$ provides us with $A \in \U$ and $B \in \F_0$ such that $\N = A \sqcup B$ and $\F|_A = \U|_A$.
\end{proof}

The last theorem motivates the following definition.

\begin{definition} \label{def-extr-indec}
A free filter $\F$ on $\N$ is said to be \emph{extremely not min-representable}, if for every $A \in \G(\F)$ the trace $\F|_A$ is not an ultrafilter.
\end{definition}

Remark that for an extremely not min-representable filter $\F$ every representation $W$ of $\F$ consisting of more than one element is ``extremely non-minimal" in the following sense: for every $\U \in W$
$$
\F = \cap (W \setminus \{\U\}).
$$

\begin{theorem} \label{thm-repr-for-minimal}
The Frechet filter $\F_{Fr}$,  all Erd\"{o}s-Ulam filters $\EU^{s}$ and all summable filters $\F^{s}$ are extremely not min-representable.
\end{theorem}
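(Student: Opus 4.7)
The plan is to reduce the statement to a splitting property and then verify it combinatorially in each of the three cases. Given any free filter $\F$ and any $A\in \G(\F)$, I would first check that $\F|_A$ fails to be an ultrafilter on $A$ if and only if $A$ admits a partition $A=C\sqcup D$ with $C,D\notin \I(\F)$. Indeed, an easy computation starting from $\F|_A=\{A\cap B\colon B\in \F\}$ gives $\I(\F|_A)=\{E\subset A\colon E\in \I(\F)\}$, so $C\in \F|_A$ iff $A\setminus C\in \I(\F)$, and the ultrafilter dichotomy on $A$ becomes exactly the demand that for every $C\subset A$ either $C$ or $A\setminus C$ lies in $\I(\F)$.

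The first two cases are then straightforward. For $\F_{Fr}$, the condition $A\in \G(\F_{Fr})$ just says $A$ is infinite; enumerate $A=\{a_1<a_2<\ldots\}$ and split into its even-indexed and odd-indexed halves. For a summable filter $\F^{s}$, the condition $A\in \G(\F^{s})$ says $\sum_{k\in A}s_k=\infty$, and I would reuse the block construction of Theorem~\ref{thm-summ-filt} applied inside $A$: partition $A$ into consecutive finite blocks $A_1,A_2,\ldots$ each of $s$-mass at least $1$, and set $C:=\bigcup_{j\text{ odd}} A_j$, $D:=\bigcup_{j\text{ even}} A_j$. Both $C$ and $D$ then carry infinite $s$-mass, hence belong to $\G(\F^{s})$.

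The Erd\"os-Ulam case is the one that requires care, and I anticipate it to be the main obstacle, because $d_s$ is only a $\limsup$-valued functional and so is not additive on disjoint unions. Write $S_n:=\sum_{i=1}^n s_i$, $A(n):=\sum_{i\in A\cap \overline{1,n}}s_i$, and $\alpha:=d_s(A)>0$. The key step is to choose indices $m_1<m_2<\ldots$ satisfying simultaneously
$$\frac{A(m_k)}{S_{m_k}}>\frac{\alpha}{2}\qquad\text{and}\qquad S_{m_{k+1}}\geq \frac{200}{\alpha}\,S_{m_k},$$
the first coming from the definition of $\limsup$ and the second from thinning the subsequence (which is possible since $S_n\to\infty$). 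Setting $m_0:=0$, $B_k:=A\cap \overline{m_{k-1}+1,m_k}$, $C:=\bigcup_{k\text{ odd}} B_k$ and $D:=\bigcup_{k\text{ even}} B_k$, the rapid-growth bound renders $A(m_{k-1})\leq S_{m_{k-1}}$ negligible against $(\alpha/2)S_{m_k}$, so that
$$\sum_{i\in B_k}s_i = A(m_k)-A(m_{k-1})\geq \frac{99\alpha}{200}\,S_{m_k}.$$
Testing $d_s(C)$ along odd $m_k$ and $d_s(D)$ along even $m_k$ then yields $d_s(C),d_s(D)\geq 99\alpha/200>0$, i.e. $C,D\in \G(\EU^{s})$, finishing the proof. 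The conglomeration sequences of Section~\ref{section:poorandconglomerated} are not enough on their own, because those $D_n$ sit in $\I(\F)$; the extra ingredient required here is precisely the flexibility to choose such a partition \emph{inside} a pre-specified $A\in \G(\F)$, which in the Erd\"os-Ulam setting forces the rapid-growth condition on the subsequence $(m_k)$.
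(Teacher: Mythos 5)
Your proposal is correct and rests on exactly the same idea as the paper's proof: $\F|_A$ fails to be an ultrafilter on $A$ precisely because $A$ splits as $C \sqcup D$ with both pieces in $\G(\F)$. The paper writes out only the Fr\'echet case and declares the other two ``easy to manage''; your block decomposition for $\F^{s}$ and, especially, the rapid-growth subsequence $S_{m_{k+1}} \geq \frac{200}{\alpha} S_{m_k}$ handling the non-additivity of $d_s$ for $\EU^{s}$ correctly supply the details the paper omits.
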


\begin{proof}
We present the demonstration for $\F_{Fr}$. The other two cases are also easy to manage. We have that $A \in \G(\F_{Fr})$ \ifff $A$ is infinite. Then, $\F_{Fr}|_A$ consists of those $B \subset A$ such that $A \setminus B$ is finite. So if we write $A$ as a union $A = B_1 \sqcup B_2$ of two infinite sets, then non of them belongs to $\F_{Fr}|_A$. So, $\F_{Fr}|_A$ is not an ultrafilter on $A$.
\end{proof}

\begin{theorem} \label{thm-uncountable-minimal}
For every cardinality $\alpha$ smaller than the continuum, there exist a free filter with a minimal representation of exactly that cardinality.
\end{theorem}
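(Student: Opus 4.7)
The plan is to realize the desired filter as the intersection of ultrafilters built around an almost disjoint family. First I would fix an almost disjoint family $\{A_\gamma\}_{\gamma \in \Gamma}$ of infinite subsets of $\N$ with $|\Gamma|=\alpha$; such a family exists because, as already used in the proof of Theorem \ref{thm-suffic-cond}, $\N$ carries an almost disjoint family of cardinality continuum, out of which I can extract any $\alpha<\mathfrak{c}$ elements. Each $A_\gamma$ is intended to distinguish one ultrafilter of the representation from the others.

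Next, for each $\gamma_0\in\Gamma$ I would form the family
$$
\W_{\gamma_0}=\F_{Fr}\cup\{A_{\gamma_0}\}\cup\{\N\sm A_{\gamma'}\colon \gamma'\in \Gamma\sm\{\gamma_0\}\},
$$
check that it is centered, and apply Zorn's Lemma to extend it to a free ultrafilter $\U_{\gamma_0}$. The centeredness verification is the only place where almost disjointness is essentially used: a typical finite intersection of members of $\W_{\gamma_0}$ contains a set of the form $A_{\gamma_0}\sm(F\cup A_{\gamma'_1}\cup\cdots\cup A_{\gamma'_n})$ with $F$ finite and each $\gamma'_i\neq\gamma_0$, and since each $A_{\gamma_0}\cap A_{\gamma'_i}$ is finite this set is still infinite. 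With this done, I set $W=\{\U_\gamma\colon\gamma\in\Gamma\}$.

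Finally I would verify two features of $W$. \emph{Distinctness:} for $\gamma\neq \gamma'$ one has $A_\gamma\in \U_\gamma$ and $\N\sm A_\gamma\in \U_{\gamma'}$, so $\U_\gamma\neq \U_{\gamma'}$ and $|W|=\alpha$. \emph{Minimality:} for any chosen $\gamma_0$ the set $B=\N\sm A_{\gamma_0}$ witnesses that removing $\U_{\gamma_0}$ strictly enlarges the intersection, since $B\in \U_{\gamma'}$ for every $\gamma'\neq \gamma_0$ (hence $B\in \cap(W\sm\{\U_{\gamma_0}\})$) while $B\notin \U_{\gamma_0}$ because $A_{\gamma_0}\in \U_{\gamma_0}$. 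The case of finite $\alpha\ge 2$ is even easier, as any collection of $\alpha$ distinct free ultrafilters is automatically minimal by the remark following Theorem \ref{thm-incl}.

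The only step I expect to require any actual care is the centeredness check for $\W_{\gamma_0}$, because it must be performed uniformly in $\gamma_0$; everything else is bookkeeping, and the almost disjointness of $\{A_\gamma\}_{\gamma\in\Gamma}$ was built into the construction precisely to make this uniform check trivial.
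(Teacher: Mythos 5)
Your proposal is correct and follows essentially the same route as the paper: both build one free ultrafilter per member of an almost disjoint family of cardinality $\alpha$ and use almost disjointness to witness minimality (the paper simply picks any free $\U_{A}\ni A$, noting that $A\setminus B\in\U_A$ comes for free, and uses the witness $\bigcup_{A\neq B}(A\setminus B)$ instead of your $\N\setminus A_{\gamma_0}$; your explicit seeding of the complements into each ultrafilter is harmless but redundant). No gaps.
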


\begin{proof}
Let  $\Gamma \subset 2^\N$ be a family of cardinality  $\alpha$  consisting of pairwise almost disjoint infinite subsets. For each $A \in \Gamma$, pick an ultrafilter $\U_A$ such that $A \in \U_A$. Let us demonstrate that $W = \{\U_A \colon A \in \Gamma\}$ is a minimal collection of ultrafilters (which, of course, is a representation for $\F := \cap W$). Indeed, for every $B \in \Gamma$ and every  $A \in (\Gamma \setminus \{B\})$, the almost disjointness implies that $(A \setminus B) \in \U_A$. Denote $D = \bigcup_{A \in (\Gamma \setminus \{B\}) } A \setminus B$. By Lemma \ref{lem-incl-union}, $D \in \cap (W \setminus \{\U_B\})$. On the other hand, $D \notin \U_B$, which means that $D \notin \F$. So, we demonstrated that $\F \neq \cap (W \setminus \{\U_B\})$ which completes the proof.
\end{proof}

\begin{theorem} \label{thm-repr-for-minimal}
Let $W = \{\U_k\}_{k=1}^{n}$ be a finite or countable minimal collection of free ultrafilters, where $n \in (\N \cup \{\infty\})$, $n \ge 2$, is the number of elements in $W$, and $\F = \cap W$. Then
\begin{enumerate}
\item there exists a partition of $\N$ into disjoint family of subsets $\{N_k\}_{k=1}^{n}$ such that $N_k \in \U_k$ for all $k$.
\item A set $A \subset \N$ is an element of $\F$ if and only if there is a collection of sets $\{A_k\}_{k=1}^{n}$ such that  $A_k \in \U_k$,  $A_k \subset N_k$, and $A = \bigsqcup_{k=1}^n A_k$.
\end{enumerate}
\end{theorem}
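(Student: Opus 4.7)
The plan is to exploit minimality to produce, for each index $k$, a ``separator'' set witnessing $\cap(W\setminus\{\U_k\})\not\subset \U_k$, and then to carve out disjoint blocks from these separators while preserving membership in the corresponding ultrafilter.

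For part (1), I would first apply the definition of minimality: for every $k$ the filter $\F^{(k)}:=\cap(W\setminus\{\U_k\})$ fails to be contained in $\U_k$, so there exists $B_k\in \F^{(k)}\setminus \U_k$. Setting $C_k:=\N\setminus B_k$, we have $C_k\in \U_k$ and, since $B_k\in \F^{(k)}\subset \U_j$ for every $j\ne k$, also $C_k\notin \U_j$, equivalently $\N\setminus C_j\in \U_k$ whenever $j\ne k$. With these separators in hand, define
$$
N_k':=C_k\cap\bigcap_{j<k}(\N\setminus C_j),
$$
a \emph{finite} intersection of sets in $\U_k$, hence an element of $\U_k$. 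Pairwise disjointness is immediate: for $l<k$ one has $N_k'\subset \N\setminus C_l$ while $N_l'\subset C_l$, so $N_k'\cap N_l'=\emptyset$. To promote this disjoint family to an actual partition of $\N$, replace $N_1'$ by $N_1:=N_1'\cup(\N\setminus\bigcup_j N_j')$ and keep $N_k:=N_k'$ for $k\ge 2$; enlarging $N_1'$ preserves membership in $\U_1$, and by construction the collection $\{N_k\}$ is pairwise disjoint and covers $\N$. This construction is uniform in whether $n$ is finite or countably infinite, precisely because each individual $N_k'$ uses only a finite intersection.

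For part (2), once the partition from (1) is fixed, both directions are short. If $A\in \F$, put $A_k:=A\cap N_k$; since $A\in \F\subset \U_k$ and $N_k\in \U_k$, we get $A_k\in \U_k$, and $A=\bigsqcup_k A_k$ because $\{N_k\}$ partitions $\N$. Conversely, if $A=\bigsqcup_k A_k$ with $A_k\in \U_k$ and $A_k\subset N_k$, then Lemma \ref{lem-incl-union} applied to $W$ with the choices $B_k:=A_k$ yields $A=\bigcup_k A_k\in\cap W=\F$. The only step that needs genuine care is the passage from the separators $C_k$ to the disjoint $N_k'$ still lying in $\U_k$, where the ultrafilter property $C_j\notin \U_k\Rightarrow \N\setminus C_j\in \U_k$ is crucial; the remaining bookkeeping—absorbing the leftover into $N_1$, and verifying (2) from (1)—is routine.
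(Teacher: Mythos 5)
Your proof is correct and follows essentially the same route as the paper: the paper builds the partition recursively via Lemma \ref{lem-filt-notin-ultfilt} (peeling $\N = N_1 \sqcup B_1$, $B_1 = N_2 \sqcup B_2$, \dots, then absorbing the leftover into $N_1$), and your closed-form $N_k' = C_k \cap \bigcap_{j<k}(\N\setminus C_j)$ is exactly that recursion unrolled, with the same finite-intersection argument guaranteeing $N_k'\in\U_k$ in the countable case. Part (2) is handled identically in both proofs.
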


\begin{proof}
In order to ensure (1), we may construct the needed subsets $\{N_k\}_{k=1}^{n}$ recurrently, using on each step Lemma \ref{lem-filt-notin-ultfilt}. Indeed, for each $k < n$ denote $\F_k = \bigcap_{j=k+1}^n U_j$. Since $\N \in \F = (\U_1 \cap \F_1)$, and by minimality $\U_1 \not\supset \F_1$, an application of Lemma \ref{lem-filt-notin-ultfilt} provides us with  $N_1 \in \U_1$ and $B_1 \in \F_1$ such that $\N = N_1 \sqcup B_1$. Now $B_1 \in \F_1 = (\U_2 \cap \F_2)$, and by minimality $\U_2 \not\supset \F_2$, so we obtain $N_2 \in \U_2$ and $B_2 \in \F_2$ such that $B_1 = N_2 \sqcup B_2$. Continuing this process, we either stop on $n$-th step if $n < \infty$, or proceed up to infinity. In any case, we get a disjoint family of subsets $\{N_k\}_{k=1}^{n}$ such that $N_k \in \U_k$ for all $k$. If $\bigsqcup_{k=1}^{n} N_k = \N$, we are done. Otherwise, it remains to substitute $N_1$ by $N_1 \sqcup \left(\N \setminus \bigsqcup_{k=1}^{n} N_k\right)$.

In the item (2), one direction of the statement is just Lemma \ref{lem-incl-union}. For the other direction, taking $A \in \F$ it is sufficient to define the needed  $A_k \in \U_k$ by the formula  $A_k = A \cap N_k$.
\end{proof}

The next corollary complements Lemma \ref{lem-incl-union} in the case of intersection of a finite family of ultrafilters.

\begin{corollary} \label{coroll-repr-for-fin-int}
Let $W = \{\U_1, \U_2,\ldots,\U_n\}$, $n \ge 2$, be a finite collection of free ultrafilters, $\F = \cap W$. Then, there exists a partition of $\N$ into a disjoint family of subsets $\{N_k\}_{k=1}^{n}$ such that $N_k \in \U_k$ for every $k \in  \overline{1, n}$, satisfying that a set $A \subset \N$ is an element of $\F$ if and only if $A = \bigsqcup_{k=1}^n A_k$ for some elements $A_k \in \U_k$ with $A_k \subset N_k$ for every $k \in \overline{1, n}$.
\end{corollary}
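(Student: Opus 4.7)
The plan is to derive this corollary as an immediate specialization of Theorem \ref{thm-repr-for-minimal} to the finite case. The only non-trivial preliminary point is that every finite family of distinct free ultrafilters is automatically minimal in the sense of Definition \ref{def-min coll}, so that Theorem \ref{thm-repr-for-minimal} is applicable.

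First I would check that $W = \{\U_1,\ldots,\U_n\}$ is minimal. Suppose, for the sake of contradiction, that there is some $k$ with $\cap W = \cap (W \setminus \{\U_k\})$. The collection $W \setminus \{\U_k\}$ is also a finite set of free ultrafilters whose intersection coincides with $\cap W$, so Theorem \ref{thm-incl} applies and yields $W = W \setminus \{\U_k\}$. Since $\U_k \in W$ but $\U_k \notin W \setminus \{\U_k\}$, we reach a contradiction. Hence $W$ is minimal.

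Next, since $n < \infty$ and $W$ is a minimal representation of $\F = \cap W$, Theorem \ref{thm-repr-for-minimal} directly supplies a partition $\N = \bigsqcup_{k=1}^{n} N_k$ with $N_k \in \U_k$ for every $k \in \overline{1,n}$. Item (2) of that theorem, specialized to the finite index set $\overline{1,n}$, is precisely the required characterization: $A \in \F$ if and only if $A$ admits a (necessarily disjoint) decomposition $A = \bigsqcup_{k=1}^{n} A_k$ with $A_k \in \U_k$ and $A_k \subset N_k$. This completes the argument.

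I do not expect any real obstacle, since the proof is essentially bookkeeping; the only subtle point is the verification of minimality, where one must invoke Theorem \ref{thm-incl} rather than attempting a direct argument from the definition.
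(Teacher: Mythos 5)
Your proposal is correct and follows exactly the paper's own route: the paper likewise deduces minimality of any finite collection of ultrafilters from Theorem \ref{thm-incl} and then applies Theorem \ref{thm-repr-for-minimal}. You merely spell out the minimality verification in more detail than the paper does.
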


\begin{proof}
Every finite collection of ultrafilters is minimal by Theorem \ref{thm-incl}, so Theorem \ref{thm-repr-for-minimal} is applicable.
\end{proof}

The descriptions given in Theorem \ref{thm-repr-for-minimal} for finite $n$ and for $n = \infty$ look very similar. Nevertheless, the infinite case loses some nice properties of the finite case, which is reflected in the following theorem.

\begin{theorem}\label{thm_inf_int}
	Let $W = \{\U_k\}_{k=1}^{\infty}$ be a countable minimal collection of free ultrafilters,  and let $\F = \cap W$. Then there exists a free ultrafilter $\U_0$ such that  $\U_0 \supset \F$ but  $\U_0 \notin W$. In particular, the representation of $\F$ as a countable intersection of ultrafilters is not unique: $\F = \bigcap_{k=1}^{\infty} \U_k$ and at the same time $\F = \bigcap_{k=0}^{\infty} \U_k$.
\end{theorem}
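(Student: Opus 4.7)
The plan is to produce $\U_0$ as any free ultrafilter extending the family $\mathcal{D}=\F\cup\{\N\setminus N_k\colon k\in \N\}$, where $\{N_k\}_{k=1}^{\infty}$ is the partition of $\N$ supplied by Theorem \ref{thm-repr-for-minimal}(1). Any such $\U_0$ automatically satisfies $\U_0\supset \F$, and because $N_k\in \U_k$ while $\N\setminus N_k\in \U_0$, we get $\U_0\neq \U_k$ for every $k\ge 1$. So the whole argument reduces to verifying that $\mathcal{D}$ is centered, which allows it to be extended to a filter and hence to an ultrafilter.

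To check centeredness, I would first note that $\F$ is closed under finite intersections, so any nontrivial finite intersection of members of $\mathcal{D}$ can be written in the form $A\cap \bigcap_{j=1}^m(\N\setminus N_{k_j})$ for some $A\in \F$ and distinct indices $k_1,\ldots,k_m$. Invoking Theorem \ref{thm-repr-for-minimal}(2) together with disjointness of the $N_k$, we may write $A=\bigsqcup_{k\in \N}(A\cap N_k)$ with $A\cap N_k\in \U_k$ for every $k$, and consequently
$$A\cap \bigcap_{j=1}^m(\N\setminus N_{k_j})=\bigsqcup_{k\notin\{k_1,\ldots,k_m\}}(A\cap N_k).$$
Since $W$ is infinite, there exists some $\ell\notin\{k_1,\ldots,k_m\}$; the corresponding summand $A\cap N_\ell$ lies in the filter $\U_\ell$ and is therefore non-empty. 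This shows that the displayed intersection is non-empty, establishing centeredness of $\mathcal{D}$.

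Once $\mathcal{D}$ is known to be centered, I would extend it to a free ultrafilter $\U_0$ using the standard Zorn's-lemma argument; freeness of $\U_0$ is automatic from $\U_0\supset \F$. For the ``in particular'' part of the statement, putting $W'=\{\U_0\}\cup W$ gives a second representation of $\F$: one has $\bigcap W'\supset \bigcap W=\F$ trivially from the addition of one more ultrafilter, but also $\bigcap W'\subset \bigcap W=\F$, so $\F=\bigcap_{k=0}^{\infty}\U_k$ as well as $\F=\bigcap_{k=1}^{\infty}\U_k$.

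I do not expect a serious obstacle: the proof is driven entirely by the clean partition structure handed to us by Theorem \ref{thm-repr-for-minimal}, and the only subtlety is the (mild) use of the infiniteness of $W$ to ensure that at least one piece $N_\ell$ with $\ell\notin\{k_1,\ldots,k_m\}$ remains at each stage of the centeredness check. Minimality of $W$ enters only insofar as it is needed to apply Theorem \ref{thm-repr-for-minimal}.
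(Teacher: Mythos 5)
Your proof is correct and takes essentially the same route as the paper: the paper's argument also starts from the partition $\{N_k\}$ of Theorem \ref{thm-repr-for-minimal}, builds the auxiliary filter $G=\{A\subset\N\colon A\cap N_k\in\U_k \text{ for all but finitely many } k\}$ (which is exactly the filter generated by your centered family $\F\cup\{\N\setminus N_k\colon k\in\N\}$), and extends it to the desired $\U_0$. The only cosmetic difference is that you verify centeredness of a generating family while the paper verifies the filter axioms for $G$ directly.
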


\begin{proof}
	Take the sets $N_k$ from Theorem \ref{thm-repr-for-minimal} and consider the following family $G$ of sets: $G = \{A \subset \N\colon \exists j \in \N \ \forall k > j\ A \cap N_k \in \U_k\}$. Evidently, $G \supset \F$. Let us show that
	\begin{enumerate}
		\item[(i)] the family $G$ is a filter;
		\item[(ii)] $\U_k \not\supset G$ $\forall k \in \N$.
	\end{enumerate}

For the item (i) let us check that $G$ verifies the axioms of filter.
	\begin{itemize}
		\item $\emptyset \notin G$, because $\emptyset \notin \U_k$ for each $k \in \N$;
		\item let $A, B \in G$. We have to show that $A \cap B \in G$. As $A, B \in G$, there exists $j_1 \in \N$ such that $A \cap N_k \in \U_k$ for all $k > j_1$, and there exists $j_2 \in \N$ such that $A \cap N_k \in \U_k$ for all $k > j_2$. Denote $j:=\max\{j_1,j_2\}$. This number $j$ can be easily used to show that $A \cap B \in G$;
		\item Let $A \in G$, $D \subset \N$, $A \subset D$. Let's show that $D \in G$. We know that $A \in G$, which means that exists $j_1 \in \N$ such that $A \cap N_k \in \U_k$ for all $k > j_1$. As $D \supset A \supset A \cap N_k$, $A \cap N_k \in \U_k$, and $\U_k$ is a filter, we obtain that $D \in \U_k$ for all $k > j_1$. That is, $D \in G$. We have shown that $G$ is a filter.
	\end{itemize}
In order to  prove the statement (ii), it is enough to remark that for every $k \in \N$ the corresponding $A_k = \bigcap_{j=k+1}^\infty N_j$ belongs to $G$ but $A_k \notin \U_k$ because it does not intersect the set $N_k \in \U_k$.

Let us take as the needed $\U_0$ an arbitrary ultrafilter that majorizes $G$. Then $\U_0 \supset G \supset \F$, and $\U_k \neq \U_0$
	 for all $k \in \N$. The latter is true because $\U_k \not\supset G$ for any $k \in \N$  but $\U_0 \supset G$.
\end{proof}

Although in the infinite case representations are not unique, the MINIMAL representation, if it exists, has to be unique; we will show this below in Theorem \ref{thm_min-unique}.

\begin{definition} \label{def-inavoidable}
Let $\F$ be a free filter and $\U$ be a free ultrafilter on $\N$. $\U$ is said to be \emph{inavoidable} for $\F$, if every representation $W$ of $\F$ contains $\U$ as an element.
\end{definition}

Lemma \ref{lem-filt-notin-ultfilt}  implies that, if $\U$ is an inavoidable ultrafilter for $\F$, then there is  $A \in \U$ such that the trace $\F|_A$ of $\F$ on $A$ is the same as $\U|_A$. The inverse implication is also true.

\begin {lemma}  \label{lem-inavoidable-inverse}
Let $\F$  be a free filter and $\U$ be a free ultrafilter on $\N$. Assume that there is $A \in \U$ such that the trace $\F|_A$ of $\F$ on $A$ is the same as $\U|_A$. Then $\U$ is inavoidable for $\F$.
\end{lemma}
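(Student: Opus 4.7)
The plan is to argue by contradiction: assume $W$ is a representation of $\F$ (that is, $\cap W = \F$) with $\U \notin W$, and derive an inconsistency. The central observation will be that the hypothesis $\F|_A = \U|_A$ forces every $\V \in W$ for which $A \in \V$ to coincide with $\U$, which combined with $\U \notin W$ will place $\N \setminus A$ in $\F$.

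First I would verify a small preliminary: $\F \subset \U$. Indeed, for any $F \in \F$, the set $F \cap A$ belongs to $\F|_A = \U|_A$, and since $A \in \U$, this set lies in $\U$; hence $F \in \U$.

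The key step is to show that if $\V \in W$ satisfies $A \in \V$ then $\V = \U$. Since $\F \subset \V$, we have $\F|_A \subset \V|_A$, and because $A \in \V$, $\V|_A$ is an ultrafilter on $A$. But $\F|_A = \U|_A$ is already an ultrafilter on $A$, so necessarily $\V|_A = \U|_A$. Now, using that $A \in \V \cap \U$, for any $E \subset \N$ one has $E \in \V \iff E \cap A \in \V \iff E \cap A \in \U \iff E \in \U$, so $\V = \U$.

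Combining these, since $\U \notin W$ by assumption, no $\V \in W$ contains $A$; thus $\N \setminus A \in \V$ for every $\V \in W$, which gives $\N \setminus A \in \cap W = \F$. But then $\N \setminus A \in \F \subset \U$ while $A \in \U$, yielding $\emptyset \in \U$, a contradiction. I expect no genuine obstacle here; the only care needed is the trace bookkeeping in the central step, ensuring that the equality of traces on $A$ actually propagates to equality of the full ultrafilters on $\N$, which works cleanly because $A$ itself belongs to both ultrafilters.
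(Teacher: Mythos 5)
Your proof is correct and is essentially the paper's argument in contrapositive form: both rest on the two observations that any ultrafilter in a representation $W$ containing $A$ must have trace on $A$ equal to the ultrafilter $\U|_A$ and hence coincide with $\U$, and that $\N \setminus A$ cannot belong to $\F$ (the paper notes $\emptyset \in \F|_A$ would follow; you route this through $\F \subset \U$ and $A \in \U$). No gaps.
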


\begin{proof}
Let $\cap W$ be any representation for $\F$ and let $A$ as in the hypothesis. Then, $\N \setminus A \not\in \F$ (otherwise $\emptyset \in \F|_A$), so there is $\widetilde \U \in W$ such that $\N \setminus A \not\in \widetilde \U$. Since $\widetilde \U$ is an ultrafilter, we obtain that $A \in \widetilde \U$. Then, $\widetilde \U|_A \supset \F|_A = \U|_A$, so $\U|_A$ is a base for both $\U$ and $\widetilde \U$ at the same time, that is $\widetilde \U = \U$.
\end{proof}

\begin{theorem}\label{thm_min-unique}
\emph{(a)} If $W$ is a minimal collection of free ultrafilters and $\F = \cap W$, then each $\U \in W$ is inavoidable  for $\F$.

Consequently, \emph{(b)} $\F$ does not have any other minimal representation outside of $W$.
\end{theorem}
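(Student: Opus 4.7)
The plan is to deduce (a) directly from Theorem \ref{thm-restriction-ultraf} combined with Lemma \ref{lem-inavoidable-inverse}, and then derive (b) from (a) by playing the definition of minimality against any putative alternative representation.

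For part (a), I fix an arbitrary $\U \in W$. Since $W$ is a minimal representation of $\F$, Theorem \ref{thm-restriction-ultraf} supplies a set $A \in \U$ for which the trace $\F|_A$ coincides with $\U|_A$. That is precisely the hypothesis of Lemma \ref{lem-inavoidable-inverse}, whose conclusion asserts that $\U$ is inavoidable for $\F$. Since $\U \in W$ was arbitrary, every element of $W$ is inavoidable for $\F$.

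For part (b), let $W'$ be any minimal representation of $\F$; the task is to show $W' = W$. By (a), each $\U \in W$ is inavoidable for $\F$, so by the very definition of inavoidability $\U$ lies in every representation of $\F$; in particular $W \subset W'$. If there existed some $\U' \in W' \setminus W$, then from $W \subset W' \setminus \{\U'\}$ one would obtain
\[
\F \; = \; \cap W' \; \subset \; \cap(W' \setminus \{\U'\}) \; \subset \; \cap W \; = \; \F,
\]
forcing $\cap(W' \setminus \{\U'\}) = \F = \cap W'$, which contradicts the minimality of $W'$. Hence $W' = W$.

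The overall argument is essentially bookkeeping, with no substantial obstacle: all the genuine content has already been invested in Theorem \ref{thm-restriction-ultraf} (which itself rests on the splitting provided by Lemma \ref{lem-filt-notin-ultfilt}) and in the converse characterization packaged as Lemma \ref{lem-inavoidable-inverse}. The only point requiring some care in part (b) is that the final contradiction must invoke the minimality of the alternative representation $W'$ and not that of $W$: the inclusion $W \subset W'$ is forced by applying (a) to the given $W$, while the impossibility of a strict inclusion is forced by $W'$ itself being minimal.
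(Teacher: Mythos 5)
Your proof is correct and follows essentially the same route as the paper: part (a) is exactly the combination of Theorem \ref{thm-restriction-ultraf} with Lemma \ref{lem-inavoidable-inverse}, and part (b) is the deduction the paper leaves as ``evident,'' which you have spelled out accurately (including the correct observation that the contradiction must come from the minimality of the alternative representation $W'$). No gaps.
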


\begin{proof}
Item (a) follows from Theorem \ref{thm-restriction-ultraf} and Lemma \ref{lem-inavoidable-inverse}. The statement (b) evidently follows from (a).
\end{proof}


 \section{Remarks and open problems} \label{seq-problems}

By now the theory of filters generated by a single statistical measure is making its first steps. The number of examples is limited, consequently one may build a lot of hypotheses which maybe can be destroyed by a clever example. Nevertheless, we find it natural to share with interested colleagues those, maybe childish, questions that we are not able to answer at this stage.


Lemma \ref{s2-lem-alm-disj} says that every filter generated by a statistical measure is poor. So,

\begin{problem} \label{prob2}
Is it true that every poor free filter is generated by a statistical measure?
\end{problem}

According to Theorem \ref{thm-suffic-cond}, every conglomerated filter is not poor. So,

\begin{problem} \label{prob2+}
Is it true that every free filter that is not poor is conglomerated?
\end{problem}

A formally weaker question can be the following:

\begin{problem} \label{prob2++}
Is is true that every non-conglomerated filter is generated by a statistical measure?
\end{problem}

Remark that the answers may depend on continuum hypothesis, so the problems may also be stated as consistency questions.

We next collect some remarks and problems which we will divide in three subsection depending on whether they are related to category theory, measurability, or shift invariance.

\subsection{Remarks and problems related to category theory}
The analysis of the proof of \cite[Theorem 15.5]{ToWa2016} gives the following theorem: if a free filter $\F$ (or, equivalently, the corresponding ideal $\I$) considered as a subspace of the topological space $2^{\N}$ is meager, then there is a family $\Gamma \subset \G(\F)$ of continuum cardinality consisting of  pairwise almost disjoint infinite subsets. Consequently this $\F$ is not poor and cannot be generated by a statistical measure. This theorem is the main ingredient of the proof of already mentioned fact that a filter generated by a statistical measure as a subspace of $2^{\N}$ cannot have the property of Baire.

Recall, that in the product topology on  $2^{\N}$ the standard base of neighborhoods of a set $A \subset \N$ consists of neighborhoods
$$
U_n(A) = \{B \subset \N \colon B \cap \overline{1, n} =  A \cap \overline{1, n}\}.
$$
The proof of \cite[Theorem 15.5]{ToWa2016} mentioned above proceeds as follows. For a meager ideal $\I$ one takes a sequence of nowhere dense subsets $V_n \subset 2^{\N}$  with $\bigcup_{n=1}^\infty V_n \supset \I$ and constructs recurrently a tree $A_0$, $A_1$, $A_{0,0}$, $A_{0,1}$, $A_{1,0}$, $A_{1,1}$,  $A_{0,0,0}$, etc., of finite subsets of  $\N$ and a sequence $m_1 < m_2 < \ldots$ of naturals with the properties that  $A_{t_1, t_2, \ldots, t_n} \subset \overline{1, m_n}$ for any multi-index $t = (t_1, t_2, \ldots, t_n ) \in \{0,1\}^n$; for extensions  $(t_1, t_2, \ldots, t_n, t_{n+1} ) \in \{0,1\}^{n+1}$ of $t$ the inclusions
$$A_{t_1, t_2, \ldots, t_n} \subset A_{t_1, t_2, \ldots, t_n, t_{n+1}}\quad \textrm{and} \quad A_{t_1, t_2, \ldots, t_n, t_{n+1}} \setminus A_{t_1, t_2, \ldots, t_n}  \subset  \overline{m_n + 1, m_{n + 1}}
$$
take place; and that $U_{m_n}(A_{t_1, t_2, \ldots, t_n}) \bigcap V_n = \emptyset$. The corresponding family $\Gamma \subset \G(\F)$ of  pairwise almost disjoint infinite subsets is made up from infinite branches of this tree: for every sequence $(t_1, t_2, \ldots) \in \{0,1\}^\N$ one takes $\bigcup_{n=1}^\infty A_{t_1, t_2, \ldots, t_n}$ as an element of $\Gamma$.

If this tree could be build with the additional property that
$$\quad A_{t_1, t_2, \ldots, t_{n-1}, 0} \setminus A_{t_1, t_2, \ldots, t_{n-1}}= \emptyset \quad \textrm{and} \quad A_{t_1, t_2, \ldots, t_{n-1}, 1} \setminus A_{t_1, t_2, \ldots, t_{n-1}} = D_n, $$
where $D_n$ do not depend on the choice of $t_k$, then $\F$ would be conglomerated. This leads to the following problem.

\begin{problem} \label{prob02+}
Let a free filter $\F  \subset 2^{\N}$ be meager. Does this imply that $\F$ is conglomerated?
\end{problem}

\subsection{Remarks and results related to measurability}
The natural probabilistic measure $p(\{0\}) = p(\{1\}) = \frac12$ on $\{0, 1\}$ induces the standard product probabilistic measure  $\nu$ on $2^{\N}$. The $\sigma$-algebra $\Sigma$ of $\nu$-measurable subsets of   $2^{\N}$ contains the Borel $\sigma$-algebra $\B$ on $2^{\mathbb{N}}$. Denote $\nu^*$ the corresponding outer measure.

If $\U$ is a free ultrafilter, then, according to Sierpi\'nski \cite{Sierp1945}, see also  \cite[ Lemma 464Ca]{Fremlin},  $\nu^*(\U)=1$. Talagrand \cite{Talagrand}, see also  \cite[ Lemma 464Cb]{Fremlin}, demonstrated that  $\nu^*(\F)=1$ for every filter that is a countable intersection of ultrafilters. As $A \mapsto\mathbb{N}\backslash A$ is a
preserving measure bijection of $2^{\mathbb{N}}$, we have that also for such filters
$\nu^*(\I(\F)) = 1$, so the inclusion $2^{\mathbb{N}} \supset \F \sqcup\I(\F)$ says that a countable intersection of ultrafilters is not $\nu$-measurable.

One may ask whether every $\F$ generated by a statistical measure is not $\nu$-measurable. The answer is negative by surprisingly easy probabilistic argument  \cite[Example 464Jb]{Fremlin}. Namely, the coordinate maps $\phi_n \colon 2^{\N} \to \{0, 1\}$, $\phi_n(A) = 1$ if $n \in A$ and $\phi_n(A) = 0$ if $n \notin A$ form an independent sequence of Bernoulli random variables on the probability space $2^{\mathbb{N}}$. Fix an ultrafilter $\U$ on $\N$ and define statistical measure $\mu_{\U}$ by the formula $\mu_{\U}(A) = \lim_{\U}\frac{1}{n}\sum_{k=1}^n \phi_n(A)$. According to the Strong Law of Large Numbers, $\frac{1}{n}\sum_{k=1}^n \phi_n$ tends to $\frac{1}{2}$ with probability $1$, so
$$
\nu\left(\left\{A \in 2^{\N}\colon \mu_{\U}(A) = \frac{1}{2} \right\}\right) = 1.
$$
Consequently, $\nu\left(\F_{\mu_{\U}}\right) = 0$, and $\F_{\mu_{\U}}$ is $\nu$-measurable. Combining this with the Talagrand's result cited above, we obtain the following corollary.

\begin{corollary} \label{cor-stat-not-cinter}
There is a free filter $\F$ generated by a statistical measure which cannot be represented as a countable intersection of ultrafilters. All filters of the form $\F_{\mu_{\U}}$ are such examples.
\end{corollary}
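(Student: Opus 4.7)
The plan is essentially to assemble the two ingredients laid out in the paragraph immediately preceding the statement: (i) the filter $\F_{\mu_{\U}}$ is $\nu$-measurable (in fact $\nu$-null), and (ii) any free filter that is a countable intersection of ultrafilters fails to be $\nu$-measurable. Together these force $\F_{\mu_{\U}}$ to not be representable as a countable intersection of ultrafilters, which is exactly the claim.

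For (i), the first step is to observe that $\F_{\mu_{\U}}=\{A\in 2^{\N}\colon \mu_{\U}(A)=1\}$ is disjoint from the set $S:=\{A\in 2^{\N}\colon \mu_{\U}(A)=\tfrac{1}{2}\}$. Since the SLLN computation recalled above gives $\nu(S)=1$, the inclusion $\F_{\mu_{\U}}\subset 2^{\N}\setminus S$ places $\F_{\mu_{\U}}$ inside a $\nu$-null set. By completeness of the Lebesgue-style extension $\nu$, this yields at once that $\F_{\mu_{\U}}$ is $\nu$-measurable with $\nu(\F_{\mu_{\U}})=0$.

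For (ii), I would invoke Talagrand's result $\nu^{*}(\F)=1$ for every countable intersection $\F$ of free ultrafilters. The map $A\mapsto \N\setminus A$ is a $\nu$-preserving involution of $2^{\N}$, so it also gives $\nu^{*}(\I(\F))=1$. If $\F$ were $\nu$-measurable, then so would be $\I(\F)$, and their measures would coincide with their outer measures and hence equal $1$. But $\F$ and $\I(\F)$ are disjoint (since $A\in\F$ forces $\N\setminus A\notin\F$, i.e.\ $A\notin\I(\F)$) and both are contained in $2^{\N}$, so $\nu(\F)+\nu(\I(\F))\leq \nu(2^{\N})=1$, a contradiction.

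Combining (i) and (ii) finishes the proof: $\F_{\mu_{\U}}$ is $\nu$-measurable whereas any countable intersection of free ultrafilters is not. Since the argument applies to \emph{any} free ultrafilter $\U$, every filter of the form $\F_{\mu_{\U}}$ works as an example. There is no genuine obstacle here, the content of the statement having been absorbed into the SLLN computation and the Sierpi\'nski--Talagrand measurability dichotomy; the only care needed is the elementary check that $\F$ and $\I(\F)$ are disjoint, which is immediate from the ultrafilter-style consistency of a filter.
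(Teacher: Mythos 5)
Your proof is correct and follows essentially the same route as the paper: the corollary is obtained there by combining the SLLN computation showing $\nu(\F_{\mu_{\U}})=0$ with the Sierpi\'nski--Talagrand fact that $\nu^{*}(\F)=\nu^{*}(\I(\F))=1$ forces non-measurability of any countable intersection of ultrafilters. The extra details you supply (disjointness of $\F$ and $\I(\F)$, completeness of $\nu$) are exactly the implicit steps in the paper's argument.
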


\subsection{Remarks and problems related to shift invariance}\label{ssec4.3}
Recall that a \emph{generalized Banach limit} is a bounded linear functional ${\rm Lim}$ defined on the space $\ell_\infty$ of all bounded sequences of reals and having the following  properties:
\begin{itemize}
\item[-] if $x =(x_1 ,x_2 ,\ldots,x_n ,\ldots)$ has a limit, then $ {\rm Lim} \,x = \lim_{n \to \infty} x_n$;
\item[-] if all $x_k \ge 0$ then $ {\rm Lim} \,x \ge 0$;
\item[-] if $y =(x_2 ,x_3 ,\ldots,x_{n+1} ,\ldots)$ them $ {\rm Lim} \,x = {\rm Lim} \,y$.
\end{itemize}
The existence of such a functional is usually deduced from the Hahn-Banach Theorem. It is known that ${\rm Lim}$ is not unique. For example, in  \cite[Section 16.1.3, Exercise 11]{Kad2018} it is shown that for every free ultrafilter $\U$ on $\N$ the functional that maps each $x =(x_1 ,x_2 ,\ldots,x_n ,\ldots) \in \ell_\infty$ to the $\U$-limit of its arithmetic means $x_1, \frac{x_2 + x_2}{2}, \frac{x_2 + x_2 + x_3}{3}, \ldots$ is a generalized Banach limit.

To each generalized Banach limit ${\rm Lim}$ corresponds the statistical measure $\mu_{{\rm Lim}}$ that sends each $A \subset \N$ to ${\rm Lim}(\eins_A)$, and the filter $\F$ of those  $A \subset \N$ that  ${\rm Lim}(\eins_A) = 1$. The additional property of these filters is their shift-invariance: for every $A = \{n_1, n_2, \ldots\} \in \F$ the corresponding shift $A + 1 = \{n_1+1, n_2+1, \ldots\}$ also lies in $\F$. Corollary \ref{cor-stat-not-cinter} implies that some of shift-invariant filters cannot be represented as a countable intersection of ultrafilters. On the other hand, given a free ultrafilter $\U$ and an integer $n \in \Z$, we can define the shift $\U + n$ as the filter whose base is $\{(A + n)\cap \N \colon A \in \U\}$. Then, $\bigcap_{n \in \Z}(\U + n)$ is a shift-invariant filter which has the form of the intersection of a countable family of ultrafilters. What is not quite clear for us is whether  $\bigcap_{n \in \Z}(\U + n)$ is generated by a shift-invariant statistical measure. This leads to the following question:

\begin{problem} \label{prob4}
May a free filter $\F$ generated by a shift-invariant statistical measure be  equal to the intersection of some countable family of free ultrafilters?
\end{problem}

Some finer than shift-invariance properties of statistical measures were discussed in  \cite{Douwen}, where the word ``diffuse'' instead of ``statistical'' was used. For filters generated by the corresponding measures, the respective variants of Problem \ref{prob4} make sense as well.

The last questions concern the existence of minimal representations.

\begin{problem} \label{prob6}
Assume $\F$ has a countable representation. Does this imply that $\F$ has a minimal representation?
\end{problem}

\begin{problem} \label{prob7}
Does there exist a countable collection  $\{\U_k\}_{k=1}^{\infty}$ of free ultrafilters such that
$$
\bigcap_{k=n}^\infty \U_k = \bigcap_{k=1}^\infty \U_k
$$
for every $n \in \N$?
\end{problem}

\noindent{\bfseries Acknowledgements:} The first author gratefully thanks Prof.
Miguel Mart\'{\i}n for hospitality and fruitful discussions during the visit of the first author to the University of Granada in January-February 2020.

\end{document}